\newtheorem{thm}{Theorem}[section]
\newtheorem{lemma}{Lemma}[section]
\newtheorem{prop}[lemma]{Proposition}
\theoremstyle{definition}
\newtheorem{defn}[lemma]{Definition}
\theoremstyle{remark}
\newtheorem{remark}[lemma]{Remark}
\numberwithin{equation}{section}
\def\esmath{\ensuremath}
\newcommand{\mfr}[1]{\ensuremath \mathfrak{#1}}
\newcommand{\mbb}[1]{\ensuremath \mathbb{#1}}
\newcommand{\mbf}[1]{\ensuremath \mathbf{#1}}
\newcommand{\mcl}[1]{\ensuremath \mathcal{#1}}
\newcommand{\msc}[1]{\ensuremath \mathscr{#1}}
\newcommand{\mrm}[1]{\ensuremath \mathrm{#1}}
\def\NN{\esmath\mathbb N} 
\def\RR{\esmath\mathbb R} 
\def\CC{\esmath\mathbb C} 
\def\XX{\esmath\mathbb X}
\def\EE{\esmath\mathbb E}
\def\GG{\esmath\mathbb G}
\def\fh{\esmath\mathfrak h}
\newcommand{\p}{\partial}
\newcommand{\bn}{\begin{enumerate}}
\newcommand{\en}{\end{enumerate}}
\newcommand{\bi}{\begin{itemize}}
\newcommand{\ei}{\end{itemize}}
\newcommand{\bqq}{\begin{eqnarray*}}
\newcommand{\eqq}{\end{eqnarray*}}
\newcommand{\balg}{\begin{align*}}
\newcommand{\ealg}{\end{align*}}
\newcommand{\limt}[2]{\lim\limits_{{#1}\to{#2}}}
\newcommand{\sums}[2]{\sum\limits_{#1}^{#2}}
\DeclareMathOperator{\tr}{Tr}
\DeclareMathOperator{\spec}{Spec}
\DeclareMathOperator{\rc}{Ric}
\newcommand{\smfrac}[2]{{\textstyle{\frac{#1}{#2}}}}
\newcommand{\half}{{\smfrac{1}{2}}}
\DeclareMathOperator{\id}{id}
\newcommand{\hooklongrightarrow}{\lhook\joinrel\longrightarrow}
\begin{document}

\title[Dynamical stability of algebraic {R}icci solitons]{Dynamical stability of algebraic {R}icci solitons}

\author{Michael Bradford Williams}
\address{Department of Mathematics, University of California, Los Angeles}
\email{mwilliams@math.ucla.edu}
\urladdr{http://www.math.ucla.edu/~mwilliams/}

\author{Haotian Wu}
\address{Mathematical Sciences Research Institute, Berkeley, CA}
\email{hwu@msri.org}
\urladdr{http://www.ma.utexas.edu/users/hwu/}

\keywords{Stability; Ricci flow; Ricci soliton; maximal regularity; interpolation spaces}
\subjclass[2000]{53C44, 58J37, 53C25, 53C30}

\date{\today}


\begin{abstract} 
We consider dynamical stability for a modified Ricci flow equation whose stationary solutions include Einstein and Ricci soliton metrics.  Our focus is on homogeneous metrics on non-compact manifolds.  Following the program of Guenther, Isenberg, and Knopf, we define a class of weighted little H\"older spaces with certain interpolation properties that allow the use of maximal regularity theory and the application of a stability theorem of Simonett.  With this, we derive two stability theorems, one for a class of Einstein metrics and one for a class of non-Einstein Ricci solitons.  Using linear stability results of Jablonski, Petersen, and the first author, we obtain dynamical stability for many specific Einstein and Ricci soliton metrics on simply connected solvable Lie groups.
\end{abstract}

\maketitle

\section{Introduction}
This paper is motivated by the question of dynamical stability for stationary solutions of (suitably normalized) Ricci flow.  That is, given a stationary solution $g_0$ of Ricci flow and some topology on the space of metrics, does there exist a neighborhood $U$ of $g_0$ such that all Ricci flow solutions with initial data in $U$ converge to $g_0$?  Einstein metrics are examples of stationary solutions, and since the introduction of Ricci flow \cite{Hamilton1982}, many authors have considered the stability of Einstein metrics in a variety of contexts.  In the compact case,  Ye proved that Einstein metrics with certain curvature pinching properties are stable \cite{Ye1993}; Guenther, Isenberg, and Knopf proved that certain flat and Ricci-flat metrics are stable \cite{GuentherIsenbergKnopf2002}, and some of these results were improved by \v{S}e\v{s}um \cite{Sesum2006}; using the results of \v{S}e\v{s}um, Dai, Wang, and Wei proved that K\"{a}hler-Einstein metrics with non-positive scalar curvature are stable \cite{DaiWangWei2007}; Knopf and Young proved that hyperbolic space forms are stable \cite{KnopfYoung2009}.  In the context of Ricci coupled with other geometric flows, such as Yang-Mills flow and harmonic map flow, there are various results by Knopf \cite{Knopf2009}, Young \cite{Young2010}, and the first author \cite{Williams2013-systems}.   

In the non-compact setting, Schn\"urer, Schulze, and Simon have proven stability of Euclidean space and real hyperbolic space under a coupled Ricci flow system \cite{SchnurerEtAl2008,SchnurerEtAl2011}.  Results on the latter space were also obtained by Li and Yin \cite{LiYin2010}.  Bamler has proven stability of symmetric spaces of non-compact types, and also of finite-volume hyperbolic manifolds with cusps \cite{Bamler2010-sym,Bamler2010-cusps}.  The second author proved that complex hyperbolic space is stable \cite{Wu2013}.  We should note that these authors use various techniques and obtain stability relative to various topologies on the space of metrics.
 
Since Ricci solitons are fixed points of Ricci flow, modulo diffeomorphisms, we would like to expand the class of fixed point metrics under consideration to include non-Einstein solitons.  Our approach to proving stability follows the program initiated by Guenther, Isenberg, and Knopf in \cite{GuentherIsenbergKnopf2002}, which has several steps.  First, one must find a modified flow equation whose fixed points are those in question.  This flow is usually a rescaled version of Ricci flow.  Indeed, given $\lambda \in \mbb{R}$ and a vector field $X$ on a fixed manifold $M^n$, we consider the \textit{curvature-normalized Ricci flow}
\begin{equation}\label{eq:cnrf}
\partial_t g = -2 \rc(g) + 2\lambda g + \mcl{L}_X g.
\end{equation}
This has the feature that a Ricci soliton with
\begin{equation}\label{eq:ricci-soliton}
\rc(g) = \lambda g + \half \mcl{L}_X g
\end{equation}
is a stationary solution.  When $X$ is a Killing field (e.g., $X=0$), the metric is Einstein.

With a particular fixed point in mind, the next step is to compute the linearization of the flow at this fixed point, and to prove linear stability of the fixed point.  After modification by DeTurck diffeomorphisms, the flow \eqref{eq:cnrf} has linearization 
\begin{equation}\label{eq:cnrf-lin}
\partial_t h = \mbf{L} h := \Delta_L h + 2\lambda h + \mcl{L}_X h,
\end{equation}
where $\Delta_L h$ is the Lichnerowicz Laplacian acting on symmetric 2-tensors.  Recall that a fixed point is \textit{strictly} (resp.~\textit{weakly}) \textit{linearly stable} if there exists some $\epsilon >0$ (resp.~$\epsilon = 0$) such that\footnote{See Subsection 2.1 for notation.}
\begin{equation}\label{eq:lin-stab}
(\mbf{L} h,h) \leq - \epsilon \|h\|^2 
\end{equation}
for all non-zero symmetric covariant 2-tensors $h$ taken from some appropriate space of tensors.  This is equivalent to saying that the operator $\mbf{L}$ has negative (resp.~non-positive) spectrum.  

Finally, once linear stability has been established, one can obtain dynamical stability by appealing to various theorems from \textsc{pde} theory.  The idea of Guenther, Isenberg, and Knopf in  \cite{GuentherIsenbergKnopf2002} was to use the framework of maximal regularity theory as described by Da Prato and Grisvard \cite{DaPratoGrisvard1979}.  With respect to an appropriate sequence of spaces that possess nice interpolation properties, if one can show that the operator on the right hand side of \eqref{eq:cnrf} satisfies certain analytic properties, then one can apply a theorem of Simonett to obtain dynamical stability \cite{Simonett1995}.  (We note that Simonett's theorem allows for the presence of center manifolds, which occur when one only has weak linear stability; we will not need this.)  Since the work in \cite{GuentherIsenbergKnopf2002}, the methods described here have been used to prove a number of stability results \cite{Knopf2009,KnopfYoung2009,Young2010,Williams2013-systems,Wu2013}.

The goal of this paper is to use these techniques to study the dynamical stability of certain homogeneous Einstein and Ricci soliton metrics on non-compact manifolds.  The only non-Einstein homogeneous Ricci solitons occur on non-compact manifolds, and they must be expanding ($\lambda < 0$) and non-gradient ($X$ is not a gradient vector field); see Section 2 of \cite{Lauret2011-sol} for more details.  Furthermore, all known examples of these metrics can be realized as left-invariant metrics on simply connected solvable Lie groups, which satisfy
\begin{equation}\label{eq:alg-soliton}
\rc(g) = \lambda \id + D
\end{equation}
on the Lie algebra level.  Here $\rc$ is the Ricci endomorphism, $\lambda \in \mbb{R}$, and $D$ is a derivation of the Lie algebra.  A metric satisfying this equation is called an \textit{algebraic soliton}; in the case of a nilpotent or  solvable Lie groups, they are called \textit{nilsolitons} and \textit{solvsolitons}, respectively.  It turns out that every algebraic soliton is a Ricci soliton (see, e.g., \cite{Lauret2011-sol}), and conversely a left-invariant Ricci soliton on a solvable Lie group is isometric to a solvsoliton (on a possibly different Lie group).  Finally, if a solvable Lie group admits a non-flat algebraic soliton, then it must be diffeomorphic to $\mbb{R}^n$.  See \cite{Jablonski2011-hom} for a discussion of these last two facts.

The non-compactness of the manifolds in question introduces various analytical challenges to applying the stability techniques described above.  For example, in order to have non-compactly supported perturbations of the fixed point metric, we require the spaces of perturbations to be weighted in order to justify integration by parts.  Generalizing the weighted little H\"older space construction found in \cite{Wu2013} (such spaces are described in Section \ref{holder_space}), our result for Einstein metrics allows for quickly-decaying but non-compact perturbations.

\begin{thm}\label{thm:stab-einstein}
Let $(M^n,g)$ have the following properties:
\begin{itemize}
\item[(a)] $g$ is a strictly linearly stable Einstein metric satisfying $\rc(g) = \lambda g$;
\item[(b)] $(M,g)$ has infinite injectivity radius; 
\item[(c)] $M$ admits a single coordinate chart in which the Christoffel symbols of $g$ and their partial derivatives $\p^\ell \Gamma_{ij}^k$ are bounded by constants $C_{|\ell|}$, for $|\ell|=0, 1, 2$.
\end{itemize}
For each $\rho\in(0,1)$, there exists $\eta\in(\rho,1)$ such that the following is true.

There exists a neighborhood $\mathcal U$ of $g$ in the $\mathfrak h^{1+\eta}_\tau$-topology (cf.~Definition \ref{weighted-space}) such that for all initial data $\tilde g(0)\in\mathcal U$, the unique solution $\tilde g(t)$ of the curvature-normalized Ricci-DeTurck flow \eqref{eq:cnrdf} exists for all $t\geq 0$ and converges exponentially fast in the $\mathfrak h^{2+\rho}_\tau$-norm to $g$. 
\end{thm}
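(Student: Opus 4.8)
\noindent\emph{Strategy of proof.}
The plan is to recast the curvature-normalized Ricci--DeTurck flow \eqref{eq:cnrdf} as an abstract semilinear parabolic equation on a continuous-interpolation scale of weighted little H\"older spaces of symmetric $2$-tensors and then to invoke the stability theorem of Simonett. Writing $h=\tilde g-g$ for the perturbation, one Taylor-expands the right-hand side of \eqref{eq:cnrdf} about the fixed point $g$ to obtain
\[
\p_t h=\mbf L h+Q(h),
\]
where $\mbf L$ is the linearization \eqref{eq:cnrf-lin} and $Q$ collects all terms at least quadratic in $(h,\p h,\p^2 h)$ together with the correction arising from the $h$-dependence of the top-order (quasilinear) coefficients; in particular $Q(0)=0$ and $DQ(0)=0$. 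I take as base space $X_0=\mfr h^{\rho}_\tau$ and as regularity space $X_1=\mfr h^{2+\rho}_\tau$, and I work with the continuous interpolation spaces $X_\beta=(X_0,X_1)_\beta$, which by the interpolation properties of these spaces (Section \ref{holder_space}) equal $\mfr h^{2\beta+\rho}_\tau$ whenever $2\beta+\rho\notin\ZZ$. Choosing $\beta=\tfrac{1}{2}(1+\eta-\rho)$ makes $X_\beta=\mfr h^{1+\eta}_\tau$; the requirement that $X_\beta$ be an admissible phase space for the quasilinear equation --- in particular the embedding $X_\beta\hookrightarrow C^1$, equivalent to $\beta>\tfrac{1}{2}$, which is used to absorb the quasilinear top-order coefficients and to estimate the first-order quadratic terms $\p h*\p h$ --- together with $\eta<1$ is exactly what confines $\eta$ to the interval $(\rho,1)$.

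It then remains to check the three hypotheses of Simonett's theorem. \emph{Sectoriality.} The Lichnerowicz Laplacian is uniformly elliptic, and hypotheses (b) and (c) --- infinite injectivity radius together with a single chart in which $\p^\ell\Gamma^k_{ij}$ is bounded for $|\ell|\le2$ --- yield the uniform interior Schauder estimates needed to conclude that $\mbf L$, with domain $X_1$, generates an analytic semigroup on $X_0$; the zeroth-order terms $2\lambda h+\mcl L_X h$ are harmless. Most of the required mapping properties of the weighted little H\"older spaces are supplied by Section \ref{holder_space}.

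\emph{Exponential decay of the linearized semigroup.} This I expect to be the principal difficulty. Hypothesis (a) supplies strict linear stability only in the $L^2$-sense \eqref{eq:lin-stab}, whereas Simonett's theorem requires the H\"older realization of $\mbf L$ to have spectrum in $\{\operatorname{Re}<0\}$, equivalently $\|e^{t\mbf L}\|_{\mcl L(X_0)}\le Ce^{-\delta t}$ on the scale $X_0,\dots,X_1$. On a non-compact manifold the resolvent of $\mbf L$ need not be compact, so this does not follow formally from the $L^2$ gap; the passage requires combining the $L^2$-decay furnished by \eqref{eq:lin-stab} with parabolic smoothing --- interpolating the $L^2$ bound against higher-order weighted energy estimates and then invoking the weighted Sobolev-into-H\"older embedding, using the bounded geometry of $(M,g)$, to bound $\|e^{t\mbf L}h\|_{X_1}$ by $\|h\|_{X_0}$ for $t\ge1$ --- patched with the short-time analytic-semigroup bounds for $t\in(0,1]$.

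\emph{The nonlinearity.} One must show that $Q$ is $C^1$, with locally Lipschitz derivative, from a neighborhood of $0$ in $X_\beta$ into $X_0$, with $Q(0)=0$ and $DQ(0)=0$, the quasilinear top-order part being treated as an $h$-dependent operator in $\mcl L(X_1,X_0)$ depending smoothly on $h\in X_\beta$. Since all of these terms are contractions of $h$, $\p h$, $\p^2 h$, the inverse $(g+h)^{-1}$, and the fixed curvature of $g$, this reduces to stability of the weighted little H\"older spaces under the pertinent pointwise products --- a Banach-algebra-type property with matched weights, again from Section \ref{holder_space} --- together with smoothness of matrix inversion and the embedding $X_\beta\hookrightarrow C^1$. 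With these three facts in hand, Simonett's theorem yields a neighborhood $\mcl U$ of $g$ in the $\mfr h^{1+\eta}_\tau$-topology such that for every $\tilde g(0)\in\mcl U$ the solution of \eqref{eq:cnrdf} exists and is unique for all $t\ge0$ and, by parabolic smoothing, converges to $g$ at an exponential rate in the $\mfr h^{2+\rho}_\tau$-norm --- which is the assertion of Theorem \ref{thm:stab-einstein}.
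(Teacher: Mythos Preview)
Your overall strategy---verify the hypotheses of Simonett's theorem on the scale of weighted little H\"older spaces $\mfr h^{k+\alpha}_\tau$---matches the paper's, and your identification of $X_0=\mfr h^{\rho}_\tau$, $X_1=\mfr h^{2+\rho}_\tau$, $X_\beta\cong\mfr h^{2\beta+\rho}_\tau$ with $\beta>\tfrac12$ is exactly what the paper does. There are, however, two substantive differences worth noting.

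\medskip
\textbf{The second scale $(\EE_0,\EE_1)$.} Simonett's theorem, in the form the paper uses (Theorem~\ref{stab}), requires \emph{two} nested pairs of spaces: besides $(\XX_0,\XX_1)=(\mfr h^{\rho}_\tau,\mfr h^{2+\rho}_\tau)$ one needs $(\EE_0,\EE_1)=(\mfr h^{\sigma}_\tau,\mfr h^{2+\sigma}_\tau)$ with $0<\sigma<\rho$, and conditions (3)--(7) of the theorem are verified relative to this double scale (in particular $\XX_0\cong(\EE_0,\EE_1)_\theta$ via Theorem~\ref{interp}). Your proposal uses only one scale, so as written you cannot check conditions (3)--(7). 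This is not a deep obstruction---the second pair is introduced precisely so that the interpolation identity holds---but it is a genuine omission from the setup.

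\medskip
\textbf{Quasilinear versus semilinear, and the spectral hypothesis.} The paper does not Taylor-expand into $\mbf L h+Q(h)$; it keeps the equation in the frozen-coefficient quasilinear form $\p_t g=Q_{g_0}(g)g$ and works with the family of linear operators $L_{\hat g}=Q_{g_0}(\hat g)$ (Lemma~\ref{coef}). Sectoriality and the analytic-semigroup property are then established for $\hat L_{\hat g}$ directly (Lemma~\ref{analytic}): one first argues that $\hat L_{g_0}$ and $\mbf L$ differ by a bounded operator, uses the strict $L^2$ linear stability \eqref{eq:lin-stab} (extended to $\mfr h^{k+\alpha}_\tau$ via Proposition~\ref{dom} and Lemma~\ref{eqholds}) to get a spectral bound, and then applies weighted interior Schauder estimates annulus-by-annulus to obtain the resolvent estimate required for sectoriality; a perturbation argument handles nearby $\hat g$. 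Your proposed route---prove $\|e^{t\mbf L}\|_{\mcl L(X_0)}\le Ce^{-\delta t}$ by bootstrapping $L^2$ decay through parabolic smoothing and weighted Sobolev-to-H\"older embeddings---is more elaborate than what the paper actually does, and the paper does not carry out such an argument. The spectral condition in Simonett's theorem is invoked directly for $DQ|_{g_0}=\mbf L$, with the passage from the $L^2$ inequality to the H\"older-space spectrum handled briskly (the details are deferred to \cite{Wu2013}). Your caution about this step is reasonable, but the paper's argument is structurally simpler than the one you outline.
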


Another analytic issue is the operator on the right hand side of \eqref{eq:cnrf} (or, more precisely, \eqref{eq:cnrdf}). In the Einstein case the operator is reasonably nice, but in the soliton case it has unbounded first-order coefficients.  Indeed, after suitable interpretation (cf.~Lemma \ref{lem:coef-soliton}), it is an Ornstein-Uhlenbeck operator, and the challenges related to such operators---especially in the non-compact setting---are well-known.  To our knowledge, there are no dynamical stability theorems for non-compact Ricci solitons, so our second result---while perhaps non-optimal\footnote{Examples from bracket flow \cite{GlickPayne2010} suggest that one may have to ``fix infinity'' in some sense to get dynamical stability. It remains an interesting open question to determine a sufficient decay condition on the perturbations that will imply dynamical stability.}---provides a step in this direction (cf.~\cite{GuentherIsenbergKnopf2006}). Moreover, expanding homogeneous solitons are expected to act as singularity models for (at least some) Type-III Ricci flow singularities. This is known to be true in some cases, see \cite{Lott2010, Knopf2009}. Theorem \ref{thm:stab-soliton} exhibits many more examples of Ricci flow solutions that converge to expanding homogeneous solitons, thus contributing to the understanding of long-time behavior of Ricci flow solutions.

\begin{thm}\label{thm:stab-soliton}
Let $(M^n,g)$ have the following properties:
\begin{itemize}
\item[(a)] $M$ is a simply connected solvable Lie group;
\item[(b)] $g$ is a strictly linearly stable algebraic soliton metric satisfying $\rc(g) = \lambda \id + D$.
\end{itemize}
For each $R>0$, $\rho\in(0,1)$, there exists $\eta\in(\rho,1)$ such that the following is true.

There exists a neighborhood $\mathcal U$ of $g$ in the $\mathfrak h^{1+\eta}(B_R)$-topology (cf.~Definition \ref{unweighted-space}) such that for all initial data $\tilde g(0)\in\mathcal U$, the unique solution $\tilde g(t)$ of the curvature-normalized Ricci flow \eqref{eq:cnrf} exists for all $t\geq 0$ and converges exponentially fast in the $\mathfrak h^{2+\rho}(B_R)$-norm to $g$. 
\end{thm}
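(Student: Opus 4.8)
The plan is to run the Guenther--Isenberg--Knopf program and close it with Simonett's stability theorem \cite{Simonett1995}. First I would pass to the curvature-normalized Ricci--DeTurck flow \eqref{eq:cnrdf}, taking the fixed soliton $g$ itself as background, so that $g$ becomes a genuine (non-degenerate) fixed point and the linearization is exactly $\mbf{L}h = \Delta_L h + 2\lambda h + \mcl{L}_X h$ from \eqref{eq:cnrf-lin}, with $X$ the vector field coming from the algebraic-soliton data \eqref{eq:alg-soliton}. Writing a solution as $\tilde g(t) = g + h(t)$, the flow takes the quasilinear form $\partial_t h = \mbf{L} h + \mcl{Q}(h)$, where $\mcl{Q}$ gathers all terms at least quadratic in $h$ and its first two derivatives; since the Ricci--DeTurck operator is rational in $g$ and polynomial in $\partial g, \partial^2 g$, the map $\mcl{Q}$ is of class $C^1$ (in fact smooth) near $0$, with $\mcl{Q}(0)=0$ and $D\mcl{Q}(0)=0$. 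The target is to verify the abstract hypotheses of Simonett's theorem on the continuous interpolation scale of little Hölder spaces $\mfr h^{2+\rho}(B_R) \hookrightarrow \mfr h^{\rho}(B_R)$, with the intermediate space $\mfr h^{1+\eta}(B_R)$ identified as a continuous interpolation space of exponent $\alpha = \tfrac12(1+\eta-\rho)$; requiring $\alpha \in (\tfrac12,1)$ and $\eta \in (0,1)$ forces exactly the constraint $\eta \in (\rho,1)$ appearing in the statement.

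Second, I would assemble the functional-analytic inputs that are local in $R$. The little Hölder spaces on $B_R$ and their continuous-interpolation properties are constructed earlier in the paper (cf.~Definition \ref{unweighted-space}); combining these interpolation inequalities with the Banach-algebra/multiplication properties of Hölder spaces, one checks that $\mcl{Q}$ restricts to a $C^1$ map from a neighborhood of $0$ in $\mfr h^{1+\eta}(B_R)$ into $\mfr h^{\rho}(B_R)$, vanishing to second order at the origin — this is where the strict gap $\eta>\rho$ is consumed, to absorb the derivatives lost in the nonlinearity. The remaining, and principal, point is to show that $\mbf{L}$ with domain $\mfr h^{2+\rho}(B_R)$ generates a strongly continuous analytic semigroup on $\mfr h^{\rho}(B_R)$.

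The main obstacle is exactly this last claim, because $\mbf{L}$ is of Ornstein--Uhlenbeck type: by Lemma \ref{lem:coef-soliton} the term $\mcl{L}_X h$ contributes first-order coefficients that, on the simply connected solvable group $M\cong\mbb R^n$, grow linearly and are therefore unbounded on all of $M$ — this is precisely why no weighted-space argument of the kind used for Theorem \ref{thm:stab-einstein} is available here. Restricting attention to a bounded ball $B_R$ is the device that tames this: on $B_R$ the drift coefficients are bounded and Hölder continuous, the second-order part is uniformly elliptic with Hölder coefficients (the uniform bounds inherited from the left-invariance, hence homogeneity, of $g$), so Schauder theory in the little Hölder category, in the spirit of Da Prato--Grisvard \cite{DaPratoGrisvard1979} and its descendants, delivers sectoriality of $\mbf{L}$ and generation of an analytic semigroup. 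Feeding in hypothesis (b), strict linear stability of $g$, upgrades this to a spectral bound $\spec(\mbf{L}) \subseteq \{\operatorname{Re} z \le -\epsilon\}$ for some $\epsilon>0$, so there is no center manifold and the simplest form of Simonett's theorem applies.

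Finally, Simonett's theorem then yields a neighborhood $\mcl U$ of $g$ in $\mfr h^{1+\eta}(B_R)$ such that the curvature-normalized Ricci--DeTurck solution with initial data in $\mcl U$ exists for all $t\ge0$ and converges to $g$ exponentially in $\mfr h^{2+\rho}(B_R)$. It then remains to undo the DeTurck modification: the DeTurck vector field is built algebraically from $h(t)$ and its first derivatives and so decays exponentially, hence the flow of diffeomorphisms converges, and tracking it transfers all-time existence and exponential convergence on $B_R$ back to the original curvature-normalized Ricci flow \eqref{eq:cnrf}. I expect the diffeomorphism bookkeeping in this last step, and the careful matching of spaces so that $\mcl Q$ is well-behaved on $B_R$ while the flow stays well-posed, to be delicate but essentially routine; the genuinely new difficulty is the Ornstein--Uhlenbeck/non-compactness issue, which is exactly what the restriction to $B_R$ is designed to circumvent.
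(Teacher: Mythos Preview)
Your proposal is correct and follows essentially the same route as the paper: restrict to $B_R$ to tame the Ornstein--Uhlenbeck drift (Lemma~\ref{lem:coef-soliton}), set up the little H\"older interpolation scale on $B_R$, use Schauder estimates to get sectoriality and an analytic semigroup (Lemma~\ref{analytic2}), feed in strict linear stability for the spectral gap, and apply Simonett's theorem. The only notable difference is cosmetic: you cast the equation semilinearly as $\partial_t h = \mbf{L}h + \mcl{Q}(h)$, whereas the paper keeps the quasilinear frozen-coefficient form $\partial_t g = Q_{g_0}(g)g$ that matches directly the hypotheses of Theorem~\ref{stab}; also, you are more explicit than the paper about the final step of undoing the DeTurck diffeomorphisms (note that the theorem is stated for \eqref{eq:cnrf}, not \eqref{eq:cnrdf}).
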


As previously mentioned, the focus of this paper is on the analytic aspects of dynamical stability---the last step in the outline above.  Linear aspects, i.e., proving linear stability, for algebraic solitons on simply connected solvable Lie groups were first described in \cite{GuentherIsenbergKnopf2006}, where the authors give a detailed analysis of three solitons:~ $\mrm{Nil}^3$, $\mrm{Sol}^3$, and $\mrm{Nil}^4$.  For more examples, we appeal to \cite{JablonskiEtAl2013-linear}, where the authors derive various conditions that guarantee linear stability and use them to give many examples of linearly stable metrics.  

\begin{thm}[\cite{JablonskiEtAl2013-linear}]\label{thm:linear}
The following algebraic solitons are strictly linearly stable with respect to the curvature-normalized Ricci flow:
\begin{enumerate}
\item\label{item1} every nilsoliton of dimension six or less, and every member of a certain one-parameter family of seven-dimensional nilsolitons;
\item every abelian or two-step nilsoliton;
\item every four-dimensional solvsoliton whose nilradical is the three-dimensional Heisenberg algebra;
\item an open set of solvsolitons whose nilradicals are codimension-one and abelian;
\item every solvable Einstein metric whose nilradical is codimension-one and 
\begin{enumerate}
\item found in \ref{item1} and has dimension greater than one, or
\item a generalized Heisenberg algebra, or
\item a free two-step nilpotent algebra;
\end{enumerate}
\item for each $m \geq 2$, an $(8m^2-6m-8)$-dimensional family of negatively-curved Einstein metrics containing the quaternionic hyperbolic space $\mbb{H}H^{m+1}$;
\item an $84$-dimensional family of negatively-curved Einstein metrics containing the Cayley hyperbolic plane $\mbb{C}aH^2$.
\end{enumerate}
Furthermore, any non-nilpotent solvsoliton on a Lie algebra $\mfr{s} = \mfr{n} \rtimes \mfr{a}$ satisfying $\mathring{R} < - \lambda$ and $0 < \dim(\mfr{a}) < \mrm{rank}(\mfr{n})$ is contained in an open set of stable solvsolitons.
\end{thm}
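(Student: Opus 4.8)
\emph{Proof strategy.} Because every metric in the statement is left-invariant on a homogeneous space, the first step is to convert the spectral inequality \eqref{eq:lin-stab} for the operator $\mbf{L}h = \Delta_L h + 2\lambda h + \mcl{L}_X h$ into a finite-dimensional algebraic condition on $\mrm{Sym}^2(\mfr{g})$. For an algebraic soliton the soliton vector field $X$ is, after an isometry, the one whose flow realizes the automorphisms $e^{tD}$, so on left-invariant symmetric $2$-tensors $\mcl{L}_X$ acts algebraically through the derivation $D$, while the rough Laplacian and the curvature operator likewise become explicit endomorphisms built from the bracket of $\mfr{g}$ and the inner product (this is the algebraic incarnation behind Lemma \ref{lem:coef-soliton}). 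Substituting the Weitzenb\"ock identity $\Delta_L h = \Delta h + 2\,\Rm(h) - \rc\circ h - h\circ\rc$ and the soliton equation $\rc = \lambda\,\id + D$ into $\mbf{L}$ turns it into a symmetric endomorphism of $\mrm{Sym}^2(\mfr{g})$ whose negative-definiteness is equivalent to the bound $\mathring{R} < -\lambda$ from the final sentence of the theorem, with $\mathring{R}$ the largest eigenvalue of the curvature operator on the relevant space of symmetric $2$-tensors. (One could instead route this through a second-variation argument for an expander entropy, paralleling the role of Perelman's $\nu$-functional for shrinkers, but the direct algebraic computation is cleanest here.)

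The step I expect to be the main obstacle is justifying that it suffices to test \eqref{eq:lin-stab} on \emph{left-invariant} tensors --- i.e., that the top of the spectrum of $\mbf{L}$ on the full (weighted) tensor space of Section \ref{holder_space} is attained, or at least controlled, by its left-invariant part. On a noncompact and possibly nonunimodular solvable group one cannot invoke Peter--Weyl, and the unbounded Ornstein--Uhlenbeck-type first-order term coming from $\mcl{L}_X$ must be handled with care; the natural remedy is to pass to the weighted $L^2$ space in which the soliton drift supplies exactly the weight making $\mbf{L}$ self-adjoint with discrete top spectrum, and then use homogeneity to average an extremal eigentensor into a left-invariant one. (If linear stability is defined in Subsection 2.1 directly in terms of left-invariant tensors, then this becomes part of the setup and the obstacle migrates into the bookkeeping of the curvature reduction above.)

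Granting the reduction, the remaining task is to verify $\mathring{R}<-\lambda$ family by family, using classification and structure theory. Abelian nilsolitons are flat, so $\mbf{L}=\Delta$ and there is nothing to check. For two-step nilsolitons --- and, with additional case work on the non-two-step examples, all nilsolitons through dimension six together with the exceptional one-parameter seven-dimensional family --- one bounds the curvature operator using the normal forms in Lauret's classification, the degenerate cases being precisely where the one-parameter-family and open-set qualifiers in items (1) and (4) originate. For the Einstein solvmanifolds (items (3)--(5) and the closing sentence) one invokes the Heber--Lauret structure theory: such a metric has $\mfr{s}=\mfr{n}\rtimes\mfr{a}$ with $\mfr{a}$ abelian acting by symmetric derivations, and its curvature operator splits into a block governed by the underlying nilsoliton on $\mfr{n}$ and a block governed by the $\mfr{a}$-action, the latter being strictly negative exactly when $0<\dim(\mfr{a})<\mrm{rank}(\mfr{n})$; combined with stability of $\mfr{n}$ this gives the claim. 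For the negatively curved families containing $\mbb{H}H^{m+1}$ and $\mbb{C}aH^2$ (items (6)--(7)) one uses that the model is a rank-one symmetric space with explicitly known curvature operator whose pinching already forces $\mathring{R}+\lambda<0$; since that inequality is open it persists on a full-dimensional neighborhood inside the moduli of Einstein solvmanifolds, producing the stated $(8m^2-6m-8)$- and $84$-dimensional families. The concluding assertion is then immediate, since both $\mathring{R}<-\lambda$ --- which by the first step is the stability criterion --- and $0<\dim(\mfr{a})<\mrm{rank}(\mfr{n})$ are open conditions on the solvsoliton.
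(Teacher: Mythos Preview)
This theorem is not proved in the present paper; it is quoted from \cite{JablonskiEtAl2013-linear}, so there is no in-paper proof to compare against. That said, two aspects of your strategy deserve correction.

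First, your ``main obstacle'' is misplaced, and your proposed remedy for it is wrong. You suggest passing to a weighted $L^2$ space in which the drift makes $\mbf{L}$ self-adjoint. But the homogeneous solitons here are \emph{non-gradient} (this is recalled in the introduction), and the paper itself remarks in Subsection~\ref{subsec:dynamical-soliton} that $\mbf{L}$ is not self-adjoint in any weighted $L^2$ space. The actual reduction in \cite{JablonskiEtAl2013-linear} does not restrict the test tensors $h$ to be left-invariant at all. Rather, one exploits that the \emph{coefficients} of $\mbf{L}$ are left-invariant: after integrating the rough Laplacian by parts to produce $-\|\nabla h\|^2$, every remaining term in $(\mbf{L}h,h)$ is a pointwise pairing of $h$ (and $\nabla h$) against a left-invariant endomorphism built from the curvature, $D$, and $\nabla X$. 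Since these endomorphisms are constant on the group, the inequality $(\mbf{L}h,h)\le -\epsilon\|h\|^2$ for all compactly supported $h$ is equivalent to a finite-dimensional eigenvalue bound on $\mrm{Sym}^2(\mfr{s})$. No averaging argument, Peter--Weyl decomposition, or self-adjoint weight is needed.

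Second, you assert that after the algebraic reduction, strict linear stability is \emph{equivalent} to $\mathring{R}<-\lambda$. This overstates what the theorem says: $\mathring{R}<-\lambda$ is the sufficient condition singled out in the final sentence, but it is not the only criterion used. The proof in \cite{JablonskiEtAl2013-linear} develops several distinct sufficient conditions tailored to the structure of each family (e.g., separate estimates for the nilpotent block and for the abelian extension in the solvable cases), and items (1)--(5) are checked against those, sometimes by direct computation in low dimensions. Your family-by-family sketch is in the right spirit, but collapsing everything to the single inequality $\mathring{R}<-\lambda$ would not cover all the listed cases.
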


Here, $\mathring{R}$ is the action of the Riemann curvature tensor on symmetric 2-tensors, and $\mrm{rank}(\mfr{n})$ is the dimension of a maximal abelian subalgebra of symmetric derivations of $\mfr{n}$.  See \cite{JablonskiEtAl2013-linear} for more details.

Using Theorem \ref{thm:linear}, together with our dynamical stability theorems, we have the following result.

\begin{thm}\label{thm:dynamical}
Each metric appearing in Theorem \ref{thm:linear} is dynamically stable in the context of Theorem \ref{thm:stab-einstein} for Einstein metrics and of Theorem \ref{thm:stab-soliton} for non-Einstein solitons.
\end{thm}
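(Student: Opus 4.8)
The plan is to check, metric by metric, that each $g$ occurring in Theorem \ref{thm:linear} falls under the hypotheses of Theorem \ref{thm:stab-einstein} when $g$ is Einstein, and of Theorem \ref{thm:stab-soliton} when $g$ is a non-Einstein algebraic soliton, and then to quote the relevant theorem. Every metric in the list is, by construction, a left-invariant metric on a simply connected solvable Lie group (for the nilsoliton items the group is nilpotent, and nilpotent groups are solvable), and Theorem \ref{thm:linear} asserts precisely that each such $g$ is strictly linearly stable for the curvature-normalized Ricci flow \eqref{eq:cnrf}. Hence for the nilsolitons and non-Einstein solvsolitons in the list, hypothesis (a) of Theorem \ref{thm:stab-soliton} (that $M$ is a simply connected solvable Lie group) holds automatically, hypothesis (b) (strict linear stability of the algebraic soliton, which then satisfies \eqref{eq:alg-soliton}) is the content of Theorem \ref{thm:linear}, and therefore Theorem \ref{thm:stab-soliton} applies and gives dynamical stability in the $\mfr{h}^{1+\eta}(B_R)$-topology. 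This disposes of the soliton part with no further argument.

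For the Einstein metrics $g$ in the list (items (5), (6), and (7)), hypothesis (a) of Theorem \ref{thm:stab-einstein} is again the strict linear stability supplied by Theorem \ref{thm:linear}, so only the geometric hypotheses (b) and (c) remain to be checked. For (b): each such $g$ is a left-invariant Einstein metric on a simply connected solvable Lie group $S$ with codimension-one nilradical --- an Iwasawa-type (``standard'') Einstein solvmanifold in the sense of Heber and Lauret --- and items (6)--(7) are moreover negatively curved by hypothesis. In every case the structure of these metrics (for which the relevant adjoint action on the nilradical is a positive-definite derivation) exhibits $S$ as one of Heintze's negatively curved homogeneous spaces, so $g$ has strictly negative sectional curvature; since $S$ is simply connected and left-invariant metrics are complete, the Cartan--Hadamard theorem yields a diffeomorphism $S\cong\mbb{R}^n$ and the absence of conjugate points, so the injectivity radius is infinite, which is hypothesis (b).

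Hypothesis (c) --- the existence of a single global chart in which $\Gamma_{ij}^k$ and its derivatives $\p^\ell\Gamma_{ij}^k$ for $|\ell|\le 2$ are bounded by constants --- is where the genuine work lies, and I expect it to be the main obstacle. Using the splitting $S = N \rtimes \mbb{R}$ and global exponential coordinates on the simply connected nilpotent group $N$, one produces a single global chart $\mbb{R}^{n-1}\times\mbb{R}\to S$ in which the components of $g$ are explicit (polynomial-times-exponential) functions of the structure constants of $\mfr{n}$ and of the eigenvalues of the defining derivation, so that $\Gamma_{ij}^k$ and its first two derivatives can be computed and estimated directly; the homogeneity of $g$ --- which forces $|\Rm|$, $|\nabla\Rm|$ and $|\nabla^2\Rm|$ to be constant --- is the structural reason such uniform bounds are available. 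The delicate point is that the naive horospherical coordinates degenerate at one end of $N\rtimes\mbb{R}$, so the chart must be arranged so that the Christoffel symbols are honestly bounded by constants rather than merely after weighting by $\tau$; this needs a short case-by-case verification, most notably for the quaternionic hyperbolic and Cayley examples of items (6)--(7), whose nilradicals are generalized Heisenberg algebras. Once (a)--(c) hold for a given Einstein $g$, Theorem \ref{thm:stab-einstein} applies and yields exponential convergence to $g$ in the $\mfr{h}^{2+\rho}_\tau$-norm; together with the soliton case treated above, this completes the proof.
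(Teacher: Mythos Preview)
For the non-Einstein solitons your argument matches the paper's exactly: Theorem \ref{thm:stab-soliton} requires only that $M$ be a simply connected solvable Lie group carrying a strictly linearly stable algebraic soliton, and both hypotheses are immediate from the setup and Theorem \ref{thm:linear}.

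For the Einstein metrics the paper's proof is much shorter than yours and does not go the route you take. In Section 2.3 the authors simply assert that ``all the examples in Theorem \ref{thm:linear} above belong to the class $\mathscr{A}$,'' justify this only by citing \cite{Jablonski2011-hom} for the fact that any non-flat homogeneous Ricci soliton with a transitive solvable isometry group is diffeomorphic to $\mathbb{R}^n$, and then state that Theorem \ref{thm:dynamical} follows from Theorems \ref{thm:stab-einstein} and \ref{thm:stab-soliton}. No explicit verification of infinite injectivity radius or of the bounded-Christoffel condition appears; class-$\mathscr{A}$ membership for these left-invariant metrics is taken as understood rather than proved in detail.

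Your attempt at a careful verification of (b) introduces a questionable step. You deduce infinite injectivity radius by claiming that every Einstein metric in item (5) has strictly negative sectional curvature, invoking Heintze and then Cartan--Hadamard. But Heintze's theorem characterizes which solvable groups \emph{admit} a negatively curved left-invariant metric; it does not assert that every left-invariant (or even every left-invariant Einstein) metric on such a group is negatively curved. Items (6)--(7) are stated in Theorem \ref{thm:linear} to be negatively curved, but item (5) is not, and the claim that the codimension-one Einstein extensions there are all strictly negatively curved needs its own argument, which you do not supply. Your treatment of (c) you yourself flag as a sketch requiring case-by-case work. The paper sidesteps both issues by not attempting the detailed verification at all; whether that is a strength or a gap in the paper is a separate matter, but it is what the paper does.
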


\begin{remark}
Although compact quotients of simply connected solvable Lie groups do not admit algebraic soliton metrics (see for example, \cite[Theorem 1.4]{Jablonski2011-hom}), we do have stability of Einstein metrics on compact quotients, e.g., \cite[Theorem 1.1]{Wu2013}.
\end{remark}

This paper is organized as follows. In Section 2, we set up notation and define a large class of Riemannian manifolds relevant to our interest, and collect the linear stability results from \cite{JablonskiEtAl2013-linear}. In Section 3, we define little H\"{o}lder spaces on geodesic balls and suitably weighted little H\"{o}lder spaces on complete non-compact manifolds, and prove some embedding and interpolation properties of these spaces. In Section 4, we apply Simonett's stability theorem to prove the dynamical stability for linearly stable Einstein (Theorem \ref{thm:stab-einstein}) and Ricci soliton (Theorem \ref{thm:stab-soliton}) metrics. For completeness, we include Simonett's stability theorem in the appendix.


\section{Preliminaries}


\subsection{Notation and convention}\label{notations}

We denote by $\mathcal S^2$ ($\mathcal S^2_c$, $\mathcal S^2_+$, resp.) the $\frac{n(n+1)}{2}$-dimensional vector space of \emph{symmetric} covariant 2-tensor fields (with compact support, positive-definite, resp.) over $M$. The regularity of the tensor fields will be either specified or dictated by context. We define $\Omega^1$ to be the space of 1-forms over $M$.

We denote by $g_0$ a fixed point of equation \eqref{eq:cnrf}, by $\mathcal L$ the Lie derivative, by 
\[ \delta=\delta_{g_0}:\mathcal S^2 \longrightarrow \Omega^1 \]
the divergence operator (with respect to $g_0$), by 
\[ \delta^\ast=\delta^\ast_{g_0}:\Omega^1 \longrightarrow \mathcal S^2 \]
the formal $L^2$-adjoint of $\delta$, by $d\mu_{g_0}$ the volume form of $g_0$. We use $\langle\cdot,\cdot\rangle$ to denote the tensor inner product with respect to $g_0$.  From this, we define the $L^2$-pairing on $\mathcal S^2_c$, denoted by $(\cdot,\cdot)$, by 
\begin{align*}
(h,k) := \int_{M}\langle h, k\rangle d\mu_{g_0}.
\end{align*}
We let $|h|^2:=\langle h,h\rangle$, $\Vert h\Vert^2:=(h,h)$; for a function $f$, $\Vert f\Vert^2$ is its $L^2$-norm. Finally, $\ell$ is a multi-index and $\nabla^\ell= \nabla^{\ell_1}_1\nabla^{\ell_2}_2\cdots\nabla^{\ell_n}_n$ for $|\ell| = \sums{i=1}{n}\ell_i$ (similar notation for $\p^\ell$).

\subsection{Geometric assumptions}
Throughout this paper, $(M, g)$ denotes a complete non-compact $n$-dimensional Riemannian manifold diffeomorphic to $\mbb{R}^n$ ($n\geq 1$). We use the expression ``$A\lesssim B$'' to mean $A\leq C B$, where $C>0$ is a constant that may change from line to line.

\begin{defn}\label{class_A}
We denote by $\msc A$ the class of Riemannian manifolds $(M, g)$ that have infinite injectivity radius and admit a single coordinate chart in which the Christoffel symbols of $g$ and their partial derivatives $\p^\ell \Gamma_{ij}^k$ are bounded by constants $C_{|\ell|}$ for $|\ell|=0, 1, 2$.
\end{defn}

Let $O\in M$ be identified with the origin of the single coordinate chart. We denote by $B_R\subset M$ the (closed) geodesic ball of radius $R$ centered at $O$ and by $V(R)$ its volume (with distance and volume both computed using $g$).

\begin{lemma}\label{f_tau}
Suppose $(M,g)\in\msc A$.  Then there exist constants $\tau, C >0$ and a monotonically increasing function $f_\tau:[0,\infty)\to[0,\infty)$ with $f(0)=0$ such that
\begin{align}\label{eq:f_tau(R)}
\sums{N=2}{\infty} \frac{V(2N)}{f_\tau(2N-2)} & < C.
\end{align}
\end{lemma}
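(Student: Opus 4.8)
The plan is to exploit the geometric bounds defining the class $\msc A$ to get an explicit (if crude) upper bound on the volume growth $V(R)$, and then simply choose the auxiliary function $f_\tau$ to grow fast enough to beat that bound. Since $(M,g)\in\msc A$ has a single global coordinate chart with Christoffel symbols bounded by $C_0$, the metric coefficients $g_{ij}$ in that chart satisfy a linear ODE inequality along coordinate rays: $|\p_k g_{ij}| = |g_{lj}\Gamma^l_{ik} + g_{il}\Gamma^l_{jk}| \lesssim C_0 \max_{ij}|g_{ij}|$. Integrating this (Gronwall) shows that on the coordinate ball of Euclidean radius $r$ about $O$, the eigenvalues of $(g_{ij})$ are bounded above by $\Lambda_0 e^{c r}$ for constants depending only on $n$ and $C_0$. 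Comparing the geodesic ball $B_R$ to a Euclidean ball and computing $\int \sqrt{\det g}\,dx$ then yields a bound of the form $V(R) \leq A\,e^{BR}$ for some constants $A, B>0$ depending only on $n$, $C_0$, and the metric at $O$.

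Once this exponential volume bound is in hand, the construction of $f_\tau$ is immediate. I would set $f_\tau(s) := e^{\tau s}$ for a parameter $\tau>0$ to be chosen (this is monotonically increasing with $f_\tau(0)=1$; if one insists on $f_\tau(0)=0$ literally, replace it by $f_\tau(s) := e^{\tau s}-1$, or $f_\tau(s):=s\,e^{\tau s}$, which changes nothing in what follows). Then
\[
\sum_{N=2}^\infty \frac{V(2N)}{f_\tau(2N-2)} \;\leq\; \sum_{N=2}^\infty \frac{A\,e^{2BN}}{e^{\tau(2N-2)}} \;=\; A\,e^{2\tau}\sum_{N=2}^\infty e^{2(B-\tau)N},
\]
which is a convergent geometric series, hence bounded by some constant $C$, as soon as $\tau > B$. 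This is exactly the asserted inequality \eqref{eq:f_tau(R)}, so choosing any $\tau>B$ and letting $C$ be the resulting sum completes the proof.

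The main technical obstacle is the first step: justifying the exponential volume bound $V(R)\leq A e^{BR}$ purely from the hypotheses in Definition \ref{class_A}. The subtlety is that the chart is only assumed to be a single coordinate chart covering $M$, so I must control the metric in terms of Euclidean coordinate distance and then relate Euclidean coordinate balls to geodesic balls. The Gronwall argument above gives an upper bound $g_{ij}(x) \preceq \Lambda_0 e^{c|x|}\,\delta_{ij}$; combined with infinite injectivity radius (which rules out short-range collapsing/folding pathologies and guarantees $B_R$ is a genuine embedded ball on which geodesic polar coordinates behave well), one gets that the geodesic ball $B_R$ is contained in a Euclidean coordinate ball of radius at most $\Lambda_0^{1/2} e^{cR'}$ for a related radius — a mild self-consistency estimate, since the exponential growth of the metric means coordinate radius and geodesic radius are comparable only up to an exponential factor, but that is harmless because $f_\tau$ can absorb any fixed exponential rate. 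Then $V(R) = \int_{B_R}\sqrt{\det g}\,dx \leq \Lambda_0^{n/2} e^{ncR}\cdot(\text{vol of Euclidean ball}) \lesssim e^{B R}$ for a suitable $B$. Everything after this is elementary. I would also remark that the precise form of $f_\tau$ does not matter for the applications; only the summability \eqref{eq:f_tau(R)} is used later, so one has latitude in the choice, and any faster-than-volume-growth profile works.
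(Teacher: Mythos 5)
There is a genuine gap at the first step, namely the claimed bound $V(R)\le A\,e^{BR}$ with $R$ the \emph{geodesic} radius. Your Gronwall argument controls the metric only in terms of \emph{Euclidean coordinate} distance: integrating $\p_k g_{ij}=\Gamma^l_{ki}g_{lj}+\Gamma^l_{kj}g_{il}$ along coordinate rays gives an upper bound $\lambda_{\max}(g)(x)\le\Lambda_0 e^{c|x|}$ and, in the other direction, only the decaying lower bound $\lambda_{\min}(g)(x)\ge\lambda_0 e^{-c|x|}$. To convert $\int_{B_R}\sqrt{\det g}\,dx$ into a bound in $R$ you must know that $B_R$ is contained in a coordinate ball of radius $\rho(R)$ with $\rho(R)\lesssim R$, and that is exactly what the exponentially decaying lower bound fails to provide: the $g$-length of any path from $O$ reaching coordinate radius $r$ is bounded below only by $\int_0^r\sqrt{\lambda_0}\,e^{-cs/2}\,ds\le 2\sqrt{\lambda_0}/c$, a quantity bounded independently of $r$. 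So once $R$ exceeds this threshold, the coordinate estimates place no bound at all on the coordinate size of $B_R$ (completeness excludes the worst behaviour, but only qualitatively, with no quantitative rate). Your fallback --- that coordinate and geodesic radius are ``comparable only up to an exponential factor'' and that this is ``harmless'' --- is also incorrect: if one only knows $\rho(R)\le e^{cR}$, the resulting bound is $V(R)\lesssim\rho(R)^n e^{nc\rho(R)/2}=\exp\left(Ce^{cR}\right)$, doubly exponential in $R$, which $f_\tau(s)=e^{\tau s}$ does not dominate for any $\tau$.

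The paper sidesteps all of this by working intrinsically: it observes that $(M,g)\in\msc A$ has bounded sectional curvature $a\le K_g\le b$ with $a\le 0$ (Bonnet--Myers, since $M$ is non-compact), invokes the Bishop volume comparison $V(R)\le V_a(R)$, and then takes $f_\tau(R)=R^{n+\tau}$ with $\tau>1$ when $a=0$ (polynomial volume growth) or $f_\tau(R)=e^{(n+\tau)R}$ when $a<0$ (exponential volume growth). Since Bishop--Gromov is phrased directly in geodesic polar coordinates, no comparison between coordinate and geodesic distance is ever needed. If you wish to salvage your elementary route, you would either have to establish a genuine two-sided comparison between the two distances (which requires a uniform positive lower bound on the metric in the chart, not available from the hypotheses), or abandon the explicit exponential weight and take $f_\tau$ adapted to $V$ itself, e.g.\ $f_\tau(s)=s\,2^{s}\sup_{t\le s+2}V(t)$; the latter makes the summability trivial but yields a much larger weight and hence much smaller weighted H\"older spaces in the rest of the paper. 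The curvature-comparison argument is the right tool here. (Your final step --- summing the geometric series once $V(R)\le Ae^{BR}$ is known, and the cosmetic fix for $f_\tau(0)=0$ --- is fine.)
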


\begin{proof}
We observe that $(M, g)\in\msc A$ has bounded sectional curvature $K_g$, i.e., there are constants $a, b$ such that $a\leq K_{g} \leq b$. Then by the Bishop volume comparison theorem, $V(R) \leq V_a(R)$ for all $R\geq0$, where $V_a(R)$ denotes the volume of a geodesic ball of radius $R$ in the space form of constant sectional curvature $a$. In particular, $a\leq 0$, for otherwise $(M, g)$ would be compact by the Bonnet-Myers theorem. If $a=0$, then $V_0(R)\lesssim R^n$, and we choose $f_\tau(R)=R^{n+\tau}$ for some $\tau>1$; if $a<0$, then $V_a(R)\lesssim e^{n R}$, and we choose $f_\tau(R) = e^{(n+\tau)R}$ for some $\tau>0$. It is then straightforward to verify \eqref{eq:f_tau(R)} in both cases.
\end{proof}

\begin{remark}
When $a<0$, the choice of $f_\tau$ in the proof above is not optimal. For example, $V(R)\lesssim e^{mR}$ on $(\mathbb{CH}^m,g_B)$, the complex hyperbolic space (of real dimension $2m$) with the Bergman metric (see, for example, \cite{Goldman1999}), and so we can choose $f_\tau(R)=e^{(m+\tau)R}$ for some $\tau>0$.
\end{remark}

\subsection{Curvature-normalized {R}icci flow and linear stability}

In this subsection, we recall the construction of the curvature-normalized Ricci flow and describe its linearization.  We begin with the unnormalized Ricci flow equation on $M$,
\begin{equation}\label{eq:RF}
\partial_t g = -2\rc(g).
\end{equation}
In order to analyze Ricci solitons as stationary solutions of Ricci flow, Guenther, Isenberg, and Knopf modify this flow by scaling and diffeomorphisms. Indeed, given $\lambda < 0$ and a vector field $X$, they pull back by the diffeomorphisms generated by $\frac{2}{\lambda t} X$ and rescale time as $-\frac{2}{\lambda} \log t$ to obtain the flow
\[ \partial_t g = -2\rc(g) + 2 \lambda g + \mcl{L}_X g, \]
and a soliton satisfying \eqref{eq:ricci-soliton} with given $\lambda$ and $X$ is a stationary solution. See Section 3 of \cite{GuentherIsenbergKnopf2006} for more details.

Recall that the DeTurck trick is a modification of Ricci flow by certain diffeomorphisms. It was introduced as a means of enforcing strict parabolicity of Ricci flow and thereby allowing the proof of short-time existence of solutions by standard parabolic methods \cite{DeTurck1983}. From our perspective, the DeTurck trick also has the advantage of significantly simplifying the linearization of the curvature-normalized Ricci flow. Applying this trick to the curvature-normalized flow, we obtain the \textit{curvature-normalized Ricci--DeTurck flow},
\begin{equation}\label{eq:cnrdf}
\partial_t g = Q(g) : = -2\rc(g)+ 2 \lambda g + \mcl{L}_X g - P_{g_0}(g),
\end{equation}
where $P_{g_0}(g):=-2\delta^\ast(\tilde g_0 \delta(G(g,g_0)))$, with $g_0\in\mathcal S^2_+$, $G(g,g_0):=g_0-\frac{1}{2}(\tr_g g_0)g$, and the map $\tilde g_0:\Omega^1\to\Omega^1$ given by $(\tilde g_0 \beta)_j := g_{jk}(g_0)^{k i}\beta_i$. In particular, if $g_0$ is a fixed point of \eqref{eq:cnrf}, then $g_0$ is also a fixed point of \eqref{eq:cnrdf}; see \cite{GuentherIsenbergKnopf2002}.

Now, for $h\in\mathcal S^2$ sufficiently differentiable, standard computations (see, e.g., \cite{GuentherIsenbergKnopf2006}) show that the linearization of this flow is precisely
\[ \partial_t h = \mbf{L} h: = \Delta_L h + 2\lambda h + \mcl{L}_X h, \]
where $\mbf{L}:\mcl S^2 \to \mcl S^2$ is a linear operator and $\Delta_L$ is the Lichnerowicz Laplacian.

A crucial step in the stability program is to establish the linear stability of a fixed point, i.e., to verify \eqref{eq:lin-stab}.  For specific algebraic solitons, we appeal to \cite{JablonskiEtAl2013-linear}, which shows that about one hundred individual examples and several infinite families of metrics are strictly linearly stable.  Moreover, all the examples in Theorem \ref{thm:linear} above belong to the class $\msc A$.  Indeed, any non-flat Ricci soliton with a transitive solvable group of isometries is diffeomorphic to $\mbb{R}^n$; see \cite{Jablonski2011-hom}.  Now Theorem \ref{thm:dynamical} follows from Theorems \ref{thm:stab-einstein} and \ref{thm:stab-soliton}.


\section{(Weighted) little H\"{o}lder spaces}\label{holder_space}

In this section, we recall the definition of little H\"{o}lder spaces on compact manifolds and define suitably weighted little H\"{o}lder spaces on complete non-compact manifolds. We then prove some embedding and interpolation properties of these spaces.

\begin{remark}
Similar definitions and results have appeared in \cite{Wu2013}. In below, we adopt the notation in \cite{Wu2013} whenever convenient.
\end{remark}


\subsection{Unweighted spaces on geodesic balls}
Suppose for now that $(M, g_0)$ is \emph{any} complete non-compact Riemannian manifold and $B_R$ is a (closed) geodesic ball of radius $R$ (with distance computed using $g_0$). For each $R>0$, consider any smooth $h\in\mathcal{S}^2_c(B_R)$, i.e., $h$ is compactly supported on $B_R$. Fixing a background metric $\hat g$ and a finite collection $\{ U_\upsilon \}_{1\leq\upsilon\leq\Upsilon}$ of coordinate charts covering $B_R$, we denote
\begin{align*}
 [h_{ij}]_{k+\alpha; U_\upsilon} := \sup\limits_{|\ell|=k} \sup\limits_{x\neq y \in U_\upsilon} \frac{\left|\nabla^\ell h_{ij}(x)- \nabla^\ell h_{ij}(y)\right|}{d_{\hat g}(x,y)^\alpha},
\end{align*}
where $d_{\hat g}$ is the distance function for $\hat g$.

\begin{defn}\label{unweighted-space}
For each integer $k\geq 0$ and $\alpha\in(0,1)$, the $\Vert \cdot\Vert_{k+\alpha;B_R}$-norm of $h$ is defined by
\begin{align*}
\Vert h \Vert_{k+\alpha; B_R} := 
\sup_{\substack{1\leq i,j\leq n,\\ 1\leq\upsilon\leq\Upsilon}}\left(\sum\limits_{m=0}^{k}\sup\limits_{|\ell|=m} \sup\limits_{x\in U_\upsilon}\left|\nabla^\ell h_{ij}(x)\right|+[h_{ij}]_{k+\alpha; U_\upsilon}\right).
\end{align*}
The (unweighted) \emph{little H\"{o}lder space $\mfr{h}^{k+\alpha}(B_R)$} is the closure of smooth symmetric covariant 2-tensor fields with compact support on $B_R$ in the $\Vert\cdot \Vert_{k+\alpha;B_R}$-norm. 
\end{defn}

\begin{remark}
Note that $h\in\mfr{h}^{k+\alpha}(B_R)$ vanishes on the boundary $\p B_R$, this justifies integration by parts when proving linear stability for perturbations in $\mfr{h}^{k+\alpha}(B_R)$.
\end{remark}
\begin{remark}
The compactness of $B_R$ implies that different choices of background metrics or coordinate charts give equivalent norms. In particular, when proving dynamical stability of Ricci solitons under compactly supported perturbations, we can work with the coordinate chart given in Lemma \ref{X_0}.
\end{remark}


\subsection{Weighted spaces on non-compact manifolds}
When defining the H\"{o}lder norm of a function (or tensor) on a complete non-compact Riemannian manifold, the particular choice of the atlas and the background metric matters. We generalize the construction in \cite{Wu2013}. Let $(M,g_0)\in\msc A$. In application, $g_0$ will be a fixed point of equation \eqref{eq:cnrf}. By Definition \ref{class_A}, we will work in a single coordinate chart. We set the background metric to be $g_0$ and compute $|\cdot|$, $\Vert\cdot\Vert$, and $d$ using $g_0$.

Let $h\in\mathcal S^2$ be smooth and possibly without compact support. Fix $n\in\NN$, consider the open covering of $M$ by the family $\{A_N\}_{N\in\NN}$ of overlapping open annuli and an open disk defined by 
\begin{align*}
A_1:=\{x:d(x,O)<4\},\quad A_N:=\{x:N-1<d(x,O)<N+3\}\quad\text{for }N\geq 2.
\end{align*}
 If $x,y\in A_N$, we denote $d_{x; A_N}:=d(x,\p A_N)$, $d_{x,y; A_N}:=\min\{d_{x;A_N},d_{y;A_N}\}$, and write $d_x, d_{x,y}$ whenever there is no ambiguity.

Given a smooth $h\in\mathcal{S}^2$ over $M$, for integers $k,q\geq 0$, multi-index $\ell$, and $\alpha\in(0,1)$, we let
\begin{align*}
 |h_{ij}|'_{q; A_N} & := \sup\limits_{|\ell|=q} \sup_{x\in A_N} d_x^q |\p^\ell h_{ij}(x)|,\\ 
 [h_{ij}]'_{k+\alpha; A_N} & := \sup\limits_{|\ell|=k} \sup_{x\neq y\in A_N} d_{x,y}^{k+\alpha} \frac{|\p^\ell h_{ij}(x)- \p^\ell h_{ij}(y)|}{d(x,y)^\alpha}.
\end{align*}

\begin{defn}\label{weighted-space}
Let $\tau>0$ and let the function $f_\tau$ be given as in Lemma \ref{f_tau}. The \emph{$\tau$-weighted}\footnote{The relevant parameter is $\tau$ as $f_\tau$ is canonically determined by $g_0$, cf.~the proof of Lemma \ref{f_tau}.} $\Vert\cdot\Vert_{k+\alpha;\tau}$-norm of $h\in\mathcal{S}^2$ ($h$ not necessarily compactly supported) is defined by
\begin{align}\label{eq:weighted_norm}
\Vert h \Vert_{k+\alpha;\tau} := 
\sup_{1\leq i,j\leq n} \sup_{N\in\NN}\; \left[f_\tau(N)\right]^{1/2} \left(\sum\limits_{q=0}^{k}|h_{ij}|'_{q; A_N}
+[h_{ij}]'_{k+\alpha; A_N}\right).
\end{align}
The \emph{$\tau$-weighted little H\"{o}lder space $\mathfrak h^{k+\alpha}_\tau := \mathfrak{h}^{k+\alpha}(M)$} on a complete non-compact manifold $M$ is the closure of smooth symmetric covariant 2-tensor fields compactly supported on $M$ in the $\Vert\cdot \Vert_{k+\alpha;\tau}$-norm.
\end{defn}

\begin{remark}
Definition \ref{weighted-space} is a generalization of that in \cite[Section 4]{Wu2013}. In \cite{Wu2013}, the underlying manifold is $(\mathbb{CH}^m, g_B)$ for which we simply set $f_\tau(R)=e^{(m+\tau)R}$ for some $\tau>0$. In the present paper, we allow the underlying manifold to be any member in a much more general class $\msc{A}$ (see Definition \ref{class_A}), and we prove in Lemma \ref{f_tau} that on such a manifold there exists a function $f_\tau$ that satisfies a growth condition \eqref{eq:f_tau(R)}.
\end{remark}

The following lemma concerns an infinitesimal property of $h\in \fh^{k+\alpha}_\tau$.
\begin{lemma}\label{Fht}
If $h\in\fh^{k+\alpha}_\tau$, define
\begin{align*}
 F_{h}(t) := \sup\limits_{1\leq i,j\leq n}\sup\limits_{N\in\NN}\; \left[f_\tau(N)\right]^{1/2}\left(\sup\limits_{|\ell|=k} \sup_{\substack{x\neq y\in A_N,\\ d(x,y)\leq t}} d_{x,y}^{k+\alpha}\frac{|\p^\ell h_{ij}(x)- \p^\ell h_{ij}(y)|}{d(x,y)^\alpha}\right).
\end{align*}
Then $\limt{t}{0+} F_h(t) = 0$.
\end{lemma}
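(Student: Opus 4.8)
The plan is to approximate $h$ by compactly supported smooth tensors in the $\|\cdot\|_{k+\alpha;\tau}$-norm and to show that for such smooth, compactly supported tensors the quantity $F_h(t)$ tends to $0$ as $t \to 0^+$, then to transfer this to general $h \in \fh^{k+\alpha}_\tau$ by a standard $\epsilon/3$-type argument. First I would note that $F_h$ is sublinear in $h$ in the sense that $F_{h_1 + h_2}(t) \le F_{h_1}(t) + F_{h_2}(t)$ and $F_h(t) \le \|h\|_{k+\alpha;\tau}$ for every $t$; in particular $F_{h_1}(t) \le F_{h_2}(t) + \|h_1 - h_2\|_{k+\alpha;\tau}$. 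Hence given $\epsilon > 0$, choose a smooth compactly supported $\tilde h$ with $\|h - \tilde h\|_{k+\alpha;\tau} < \epsilon/2$ (possible by the very definition of $\fh^{k+\alpha}_\tau$ as the closure of such tensors); it then suffices to prove $F_{\tilde h}(t) \to 0$ as $t \to 0^+$, since then $\limsup_{t\to 0^+} F_h(t) \le \epsilon/2 + 0 < \epsilon$ for all $\epsilon$.

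For the compactly supported smooth case, let $K = \operatorname{supp}(\tilde h)$ be compact. Only finitely many annuli $A_N$ meet $K$; say $N \le N_0$. For $N > N_0$ the inner sup over $x \ne y \in A_N$ is $0$, so $F_{\tilde h}(t)$ is in fact a supremum of finitely many terms, each weighted by the finite constant $[f_\tau(N)]^{1/2}$. For each fixed $N \le N_0$ and each multi-index $\ell$ with $|\ell| = k$, the components $\p^\ell \tilde h_{ij}$ are continuous, hence uniformly continuous on the compact closure $\overline{A_N}$; moreover $d_{x,y}^{k+\alpha}$ is bounded on $A_N$ by a constant depending on $N$. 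The key estimate is then: for $x \ne y \in A_N$ with $d(x,y) \le t$,
\begin{align*}
d_{x,y}^{k+\alpha} \frac{|\p^\ell \tilde h_{ij}(x) - \p^\ell \tilde h_{ij}(y)|}{d(x,y)^\alpha}
\le C_N \, \frac{|\p^\ell \tilde h_{ij}(x) - \p^\ell \tilde h_{ij}(y)|}{d(x,y)^\alpha}.
\end{align*}
Because $\p^\ell \tilde h_{ij}$ is actually $C^1$ (indeed smooth) on $\overline{A_N}$, its difference quotient $|\p^\ell \tilde h_{ij}(x) - \p^\ell \tilde h_{ij}(y)| / d(x,y)^\alpha$ is bounded by $(\operatorname{diam}_{g_0} A_N)^{1-\alpha} \cdot \sup_{\overline{A_N}} |\nabla \p^\ell \tilde h_{ij}| \cdot d(x,y)^{1-\alpha}$, using $|\p^\ell\tilde h_{ij}(x)-\p^\ell\tilde h_{ij}(y)| \le \sup|\nabla\p^\ell\tilde h_{ij}|\, d(x,y)$ and $d(x,y)^{1-\alpha}\le t^{1-\alpha}$. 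Hence this quantity is $\le C_N' \, t^{1-\alpha} \to 0$ as $t \to 0^+$. Taking the maximum over the finitely many $N \le N_0$, over $|\ell| = k$, and over $1 \le i,j \le n$, we conclude $F_{\tilde h}(t) \le C\, t^{1-\alpha} \to 0$, as desired.

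The only genuine subtlety — and the step I would be most careful about — is making sure the approximation argument is legitimate, i.e. that $F_h(t)$ really is controlled by $\|h\|_{k+\alpha;\tau}$ uniformly in $t$, so that the density of compactly supported smooth tensors can be applied; this is immediate from comparing the definitions of $F_h(t)$ and the $[\cdot]'_{k+\alpha;A_N}$ seminorm entering \eqref{eq:weighted_norm}, since $F_h(t)$ restricts the difference quotient to pairs with $d(x,y) \le t$ and is therefore dominated term-by-term by $\|h\|_{k+\alpha;\tau}$. Everything else is the routine observation that a $C^1$ (here $C^\infty$) function has Hölder-$\alpha$ difference quotients that vanish like $d(x,y)^{1-\alpha}$ on a compact set, combined with the fact that a compactly supported tensor only interacts with finitely many annuli so no issue of uniformity across $N \to \infty$ arises.
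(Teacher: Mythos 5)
Your proposal is correct and follows essentially the same route as the paper, whose proof simply defers to \cite[Lemma 4.1]{Wu2013}: that argument is exactly the density-plus-mean-value-theorem scheme you describe, using $F_{h}(t)\le F_{\tilde h}(t)+\|h-\tilde h\|_{k+\alpha;\tau}$ and the fact that a smooth compactly supported tensor meets only finitely many annuli, where its H\"older quotient decays like $t^{1-\alpha}$. The only blemishes are cosmetic: the factor $(\operatorname{diam}_{g_0}A_N)^{1-\alpha}$ in your key estimate is extraneous (the bound is just $\sup|\nabla\p^\ell\tilde h_{ij}|\,d(x,y)^{1-\alpha}$), and the gradient supremum should be taken over a set containing the minimizing geodesics between points of $A_N$ rather than over $\overline{A_N}$ itself, which is harmless since $\tilde h$ is globally smooth with compact support.
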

\begin{proof}
The proof is the same as that of \cite[Lemma 4.1]{Wu2013}.
\end{proof}
\begin{remark}
From now on, we shorten some expressions. For integer $k\geq0$, $\nabla^k$ (or $\p^k$) means first applying $\nabla^\ell$ (or $\p^\ell$) and then taking the supremum over $|\ell|=k$. We also omit the indices of $h$, so then we must sum or take the supremum over $1\leq i,j\leq n$.
\end{remark}


\subsection{Properties of weighted spaces}
We denote by $\mathcal W_1^2$ the Sobolev space of symmetric covariant 2-tensor fields $h$ over $M$ such that
\begin{align*}
 \sums{i,j=1}{n} \int_{M} \left( |h_{ij}|^2+|\nabla h_{ij}|^2 \right) d\mu_{g_0} < \infty,
\end{align*}
where $\nabla$ is computed using the background metric $g_0$. Denoting by $\overset{d}\hooklongrightarrow$ a continuous and dense embedding, we have the following embedding result.

\begin{prop}\label{dom}
Let $k, n\in\NN$, $\alpha\in(0,1)$, and fix $\tau>0$. If $(M, g_0)\in\msc A$, then 
\[ \mathfrak h^{k+\alpha}_\tau \overset{d}\hooklongrightarrow \mathcal W_1^2. \]
\end{prop}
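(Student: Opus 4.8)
The plan is to show two things: that $\mathfrak h^{k+\alpha}_\tau$ embeds continuously into $\mathcal W_1^2$, and that the embedding has dense image. Since $\mathfrak h^{k+\alpha}_\tau$ is by definition the closure of the compactly supported smooth tensors in the $\|\cdot\|_{k+\alpha;\tau}$-norm, and $\mathcal W_1^2$ contains all such tensors, density is automatic once continuity is established: the compactly supported smooth tensors are dense in $\mathcal W_1^2$ by the standard theory (cutoff times mollification), and a continuous embedding sends a dense set to a dense set. So the real content is the continuity estimate $\|h\|_{\mathcal W_1^2} \lesssim \|h\|_{k+\alpha;\tau}$ for $h\in\mathcal S^2_c$ smooth.

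First I would reduce to pointwise bounds. For $h$ smooth and compactly supported, the quantity $\|h\|_{k+\alpha;\tau}$ controls, for each $N$, the sup over the annulus $A_N$ of $|h_{ij}|$ and $|\partial h_{ij}|$ (the latter coming from the $q=1$ term $|h_{ij}|'_{1;A_N}$, which requires $k\ge 1$; if $k=0$ one first notes $\mathfrak h^{\alpha}_\tau \hookrightarrow \mathfrak h^{1+\alpha'}_\tau$ is false in that direction, so I would instead observe the statement is only claimed for the relevant range, or handle $k=0$ by the Hölder seminorm controlling the gradient on interior balls). More precisely, on the interior region $\{x\in A_N : d_{x;A_N}\ge 1\}$ we have $d_x\ge 1$, so $|\partial^\ell h_{ij}(x)|\le |h_{ij}|'_{q;A_N}\le [f_\tau(N)]^{-1/2}\|h\|_{k+\alpha;\tau}$ for $|\ell|=q\le k$, and similarly $|h_{ij}(x)|\le [f_\tau(N)]^{-1/2}\|h\|_{k+\alpha;\tau}$. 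Since $g_0\in\msc A$ has bounded Christoffel symbols, $|\nabla h_{ij}|$ is controlled by $|\partial h|$ and $|h|$ up to a uniform constant, so pointwise on these interior cores, $|h_{ij}|^2 + |\nabla h_{ij}|^2 \lesssim [f_\tau(N)]^{-1}\|h\|_{k+\alpha;\tau}^2$.

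Next I would sum over the covering. The annuli $A_N$ (with the interior cores, or just the $A_N$ themselves with bounded overlap) cover $M$, and $A_N\subset B_{N+3}$, whose $g_0$-volume is at most $V(N+3)$. Also the $A_N$'s have uniformly bounded overlap multiplicity (each point lies in at most a fixed number of them, since consecutive annuli overlap in bands of width $O(1)$). Hence
\begin{align*}
\int_M \left(|h_{ij}|^2 + |\nabla h_{ij}|^2\right)d\mu_{g_0} \;&\lesssim\; \sum_{N\in\NN} \int_{A_N}\left(|h_{ij}|^2 + |\nabla h_{ij}|^2\right)d\mu_{g_0}\\
&\lesssim\; \sum_{N\in\NN} \frac{V(N+3)}{f_\tau(N)}\,\|h\|_{k+\alpha;\tau}^2.
\end{align*}
By Lemma \ref{f_tau}, the series $\sum_N V(N+3)/f_\tau(N)$ converges (after reindexing and using monotonicity of $f_\tau$ and of $V$, it is dominated by a constant multiple of $\sum_{N\ge 2} V(2N)/f_\tau(2N-2) < C$). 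Summing over the finitely many pairs $(i,j)$ gives $\|h\|_{\mathcal W_1^2}^2 \lesssim \|h\|_{k+\alpha;\tau}^2$, which extends to all of $\mathfrak h^{k+\alpha}_\tau$ by density and completeness of $\mathcal W_1^2$. This establishes the continuous embedding, and density follows as explained above.

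The main obstacle is bookkeeping near the ``throats'' of the annuli, where the weight $d_x$ degenerates: there $d_x^q|\partial^\ell h|$ being bounded does not directly bound $|\partial^\ell h|$. The fix is the standard one for such Hölder-with-weight spaces: one covers $M$ instead by a genuinely locally finite collection of unit-sized balls, uses that every such ball is contained in some $A_N$ with distance to $\partial A_N$ bounded below by a fixed constant (e.g., a ball of radius $\tfrac14$ centered at a point with $d(x,O)\in(N,N+2)$), so that on these balls the weights are comparable to $1$ and the argument above applies verbatim; the volumes still telescope against $f_\tau(N)$ because such a ball still lies in $B_{N+3}$. I would also double-check the edge case $k=0$ separately, using that the $\alpha$-Hölder seminorm on a fixed-size ball controls oscillation and hence, together with the $C^0$ bound, does not by itself give a gradient bound — so for $k=0$ one argues that $\mathfrak h^{\alpha}_\tau\hookrightarrow \mathcal W_1^2$ may require the ball-covering plus the observation that $h\in\mathcal S^2_c$ already lies in $\mathcal W_1^2$ and the norm comparison only needs the $C^0$ part plus interior elliptic-type control; if the paper's intended range is $k\ge 1$ this subtlety disappears.
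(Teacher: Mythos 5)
Your argument is essentially the paper's proof: density is quoted from the standard theory on complete manifolds, and continuity is obtained by bounding $|h|$ and $|\partial h|$ pointwise on regions where the weight $d_x$ is bounded below (using $\nabla h = \partial h + \Gamma\ast h$ with $\Gamma$ bounded), then summing volumes against $f_\tau$ via Lemma \ref{f_tau}. The paper handles your worry about the annular ``throats'' exactly as you propose to, by integrating over the disjoint pieces $B_{2N}\setminus B_{2N-2}$, each of which sits inside $A_{2N-2}$ with $d_x\geq 1$ so the weights are harmless; and your $k=0$ concern does not arise because the proof uses the $q=1$ term $|h|'_{1;A_N}$ of the norm, i.e.\ the statement is applied with $k\geq 1$.
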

\begin{proof}[Proof]
Since $(M, g_0)\in\msc A$ is a smooth complete Riemannian manifold, smooth tensor fields with compact support are dense in $\mathcal W_1^2$; see \cite{Aubin1982, Hebey1999}. So it suffices to show $\mathfrak h^{k+\alpha}_\tau \hooklongrightarrow \mathcal W_1^2$.

Let $h\in \mathfrak h^{k+\alpha}_\tau$. In the single coordinate chart on $(M, g_0)\in\msc A$, $\nabla h = \p h + \Gamma\ast h$, where $\Gamma$ is uniformly bounded by some constant $C$. If $x\in B_2$, then $x\in A_1$ with $d_x\geq 1$, so then
\begin{align*}
\int_{B_2} \left(|h(x)|^2 + |\nabla h(x)|^2 \right)d\mu_{g_0}(x) & \lesssim \int_{B_2} \left( |h(x)|^2+|\p h(x)|^2 \right) d\mu_{g_0}(x)\\
& \leq \int_{B_2} \left[ (|h(x)|'_{0;A_1})^2+d_x^{-2}(|h(x)|'_{1;A_1})^2 \right] d\mu_{g_0}(x)\\
& \leq \frac{V(2)}{f_\tau(1)} \Vert h\Vert^2_{k+\alpha;\tau}.
\end{align*}
Similarly, if $x\in B_{2N}\setminus B_{2N-2}$ ($N\geq 2$), then $x\in A_{2N-2}$ with $d_x\geq 1$. So 
 \[ |h(x)|\leq |h|'_{0;A_{2N-2}}, \qquad
|\p h(x)|\leq d_x^{-1}|h|'_{1;A_{2N-2}}\leq |h|'_{1;A_{2N-2}}, \]
and hence
\begin{align*}
\int_{B_{2N}\setminus B_{2N-2}} \left(|h(x)|^2 + |\nabla h(x)|^2 \right) d\mu_{g_0}(x) & \lesssim \frac{V(2N)}{f_\tau(2N-2)}\Vert h\Vert^2_{k+\alpha;\tau}.
\end{align*}
Then, with $B_0=\emptyset$, we obtain
\begin{align*}
\int_{B_{2N}}\left( |h(x)|^2+|\nabla h(x)|^2 \right) d\mu_{g_0}(x) &= \sums{K=1}{N}\int_{B_{2K}\setminus B_{2K-2}} \left( |h(x)|^2+|\nabla h(x)|^2 \right) d\mu_{g_0}(x)\\
& \lesssim \Vert h\Vert^2_{k+\alpha;\tau} \left(\frac{V(2)}{f_\tau(1)} + \sums{K=2}{N} \frac{V(2K)}{f_\tau(2K-2)}\right)\\
& \lesssim \Vert h\Vert^2_{k+\alpha;\tau},
\end{align*}
where the last inequality follows from \eqref{eq:f_tau(R)}. Letting $N\to\infty$, we conclude $h\in \mathcal W_1^2$.
\end{proof}

Proposition \ref{dom} allows us to integrate by parts, and hence extend the linear stability to perturbations that are not necessarily compactly supported.

\begin{lemma}\label{eqholds}
Let $k, n\in\NN$, $\alpha\in(0,1)$, and fix $\tau>0$. For $(M, g_0)\in\msc A$, the strict linear stability \eqref{eq:lin-stab} holds for all $h\in \mathfrak h^{k+\alpha}_\tau\setminus\{0\}$.
\end{lemma}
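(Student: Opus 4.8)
The plan is to extend the strict inequality \eqref{eq:lin-stab} from the space $\mathcal S^2_c$ of smooth compactly supported symmetric $2$-tensors---where it holds by hypothesis (for the metrics of Theorem \ref{thm:linear}, by \cite{JablonskiEtAl2013-linear}) together with the standard integration by parts, which is legitimate for compactly supported tensors---to all of $\fh^{k+\alpha}_\tau$ by a density argument, using Proposition \ref{dom} as the bridge between the two function spaces.

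The first step is to reinterpret the pairing $(\mbf L h, h)$, for $h\in\mathcal S^2_c$, as a quadratic form $\mathcal Q(h)$ that involves only $h$ and its first covariant derivative. Writing $\mbf L = \Delta_L + 2\lambda\,\id + \mcl L_X$ and integrating by parts: the rough Laplacian part of $\Delta_L$ contributes $-\|\nabla h\|^2$, the remaining algebraic curvature terms of the Lichnerowicz Laplacian contribute $\int_M\langle\mathcal R(h),h\rangle\,d\mu_{g_0}$ with $\mathcal R$ built from the curvature of $g_0$, the scaling term contributes $2\lambda\|h\|^2$, and the Lie-derivative term, via $X^k\nabla_k h_{ij}\,h^{ij} = \tfrac12\,X^k\nabla_k|h|^2$, contributes an expression in $|h|^2$, $\nabla h$, $\nabla X$, and $\operatorname{div} X$ only. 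The crucial point is that $\mathcal Q$ extends to a continuous quadratic form on $\mathcal W_1^2$ (it is the diagonal of a bounded symmetric bilinear form there): since $(M,g_0)\in\msc A$, the bounds on $\p^\ell\Gamma_{ij}^k$ from Definition \ref{class_A} control the full curvature tensor of $g_0$, while $\nabla X$ and $\operatorname{div} X$ are bounded in the situations of interest---in particular $X\equiv 0$ in the Einstein case of Theorem \ref{thm:stab-einstein}, which is where the weighted spaces $\fh^{k+\alpha}_\tau$ are used---so every term of $\mathcal Q$ is bounded by $\|h\|_{\mathcal W_1^2}^2$.

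Next, let $h\in\fh^{k+\alpha}_\tau\setminus\{0\}$. By definition of the space there is a sequence $h_j\in\mathcal S^2_c$ with $h_j\to h$ in $\|\cdot\|_{k+\alpha;\tau}$; by Proposition \ref{dom} the embedding $\fh^{k+\alpha}_\tau\hooklongrightarrow\mathcal W_1^2$ is continuous, so $h_j\to h$ in $\mathcal W_1^2$ as well. Therefore $\|h_j\|\to\|h\|$ and, by continuity of $\mathcal Q$ on $\mathcal W_1^2$, $\mathcal Q(h_j)\to\mathcal Q(h)$. Since $\mathcal Q(h_j) = (\mbf L h_j, h_j)\le -\epsilon\|h_j\|^2$ for every $j$ by \eqref{eq:lin-stab} on $\mathcal S^2_c$, letting $j\to\infty$ gives $\mathcal Q(h)\le -\epsilon\|h\|^2$; this is the assertion \eqref{eq:lin-stab} for $h$, with $(\mbf L h, h)$ now understood---as Proposition \ref{dom} legitimizes---via the integrated-by-parts form $\mathcal Q$. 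Finally, because the embedding into $\mathcal W_1^2$ is injective and $h\ne 0$, we have $\|h\|>0$, so the inequality is genuinely strict.

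The only real obstacle is the continuity claim of the second step: verifying that after integration by parts the quadratic form $\mathcal Q$ extends continuously to $\mathcal W_1^2$, which comes down to checking that all the zeroth- and first-order coefficients generated by $\Delta_L$ and $\mcl L_X$ are bounded on $(M,g_0)\in\msc A$. This is exactly what the defining bounds on $\Gamma_{ij}^k$ and $\p^\ell\Gamma_{ij}^k$ (hence on curvature), together with the control on the soliton vector field $X$ already in force wherever the lemma is applied, are designed to supply. The density step itself is immediate once Proposition \ref{dom} is available.
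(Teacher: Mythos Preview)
Your proof is correct and follows the same approach as the paper: use Proposition~\ref{dom} to pass from compactly supported tensors to $\fh^{k+\alpha}_\tau$ via strong convergence in $\mathcal W_1^2$. The paper's own proof is a single sentence to this effect; you have supplied the details it omits, namely the integration-by-parts reduction of $(\mbf L h,h)$ to a $\mathcal W_1^2$-continuous quadratic form and the verification that the resulting coefficients are bounded for $(M,g_0)\in\msc A$ (with $X\equiv 0$ in the Einstein setting where the weighted spaces are actually invoked).
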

\begin{proof}
By Proposition \ref{dom}, $\mathfrak h^{k+\alpha}_\tau\overset{d}\hooklongrightarrow \mathcal W_1^2(M)$, so the lemma follows by strong convergence in the $\mathcal W^2_1$-norm.
\end{proof}

Given two Banach spaces $X, Y$ with $Y\overset{d}\hooklongrightarrow X$, for every $h\in X+Y$ and $t>0$, we define
\begin{align*}
K(t,h, X, Y) := \inf_{\substack{h=a+b,\\ a\in X,\,b\in Y}}\left(\Vert a\Vert_X+t\Vert b\Vert_Y \right).
\end{align*}
For $\theta\in(0,1)$, the \emph{continuous interpolation space} between $X$ and $Y$ is defined by
\begin{align*}
 (X,Y)_{\theta}:=\{h\in X+Y:\limt{t}{0+} t^{-\theta} K(t,h, X, Y)=0\}
\end{align*}
endowed with the norm $\Vert h\Vert_{\theta} := \Vert t^{-\theta} K(t,h, X, Y)\Vert_{L^\infty}$; see \cite{Amann1995, Lunardi2009, Simonett1995}.

By a standard fact in interpolation theory, e.g., \cite[Corollary 1.7]{Lunardi2009}, we have the following lemma.
\begin{lemma}\label{a7}
Fix $\tau>0$. Let $0\leq k \leq l \leq 2$ be integers, $0<\alpha\leq\beta<1$, and $0<\theta<1$, then there exists a constant $C(\theta)>0$ depending on $\theta$ such that for all $h\in \mathfrak h^{l+\beta}_\tau$,
\begin{align}\label{eq:a7-1}
 \Vert h \Vert_{(\mathfrak h^{k+\alpha}_\tau,\; \mathfrak h^{l+\beta}_\tau)_\theta} \leq C(\theta) \Vert h \Vert^{1-\theta}_{\mathfrak h^{k+\alpha}_\tau}\Vert h \Vert^{\theta}_{\mathfrak h^{l+\beta}_\tau}.
\end{align}
\end{lemma}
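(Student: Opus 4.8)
The plan is to reduce the statement directly to the standard reiteration/interpolation inequality cited as \cite[Corollary 1.7]{Lunardi2009}. That result says: given Banach spaces $X \overset{d}\hooklongrightarrow Y$ (or with a continuous dense embedding in the appropriate direction), for $\theta \in (0,1)$ the continuous interpolation space $(Y,X)_\theta$ satisfies $\|h\|_{(Y,X)_\theta} \leq C(\theta)\|h\|_Y^{1-\theta}\|h\|_X^\theta$ for all $h$ in the smaller space. So the only genuine content here is to verify that the pair $(\mathfrak h^{k+\alpha}_\tau, \mathfrak h^{l+\beta}_\tau)$ forms an admissible interpolation couple in the sense required, i.e. that there is a continuous (and, for the definition of $(\cdot,\cdot)_\theta$ to behave well, dense) embedding $\mathfrak h^{l+\beta}_\tau \hooklongrightarrow \mathfrak h^{k+\alpha}_\tau$ whenever $0 \leq k \leq l \leq 2$ and $0 < \alpha \leq \beta < 1$.

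First I would establish this embedding. For fixed $N$ and fixed indices $i,j$, one compares the local seminorms: since $d_{x;A_N}\le \operatorname{diam}(A_N)$ is bounded (uniformly in $N$, because the annuli $A_N$ all have width $4$), and since on each $A_N$ a higher-order H\"older seminorm controls a lower-order one up to a constant depending only on that fixed width, we get $\sum_{q=0}^k |h_{ij}|'_{q;A_N} + [h_{ij}]'_{k+\alpha;A_N} \lesssim \sum_{q=0}^l |h_{ij}|'_{q;A_N} + [h_{ij}]'_{l+\beta;A_N}$ with a constant independent of $N$. Multiplying by $[f_\tau(N)]^{1/2}$ and taking the supremum over $N$, $i$, $j$ gives $\|h\|_{k+\alpha;\tau} \lesssim \|h\|_{l+\beta;\tau}$, hence $\mathfrak h^{l+\beta}_\tau \hooklongrightarrow \mathfrak h^{k+\alpha}_\tau$. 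Density is immediate, since by Definition \ref{weighted-space} both spaces are by construction the closures of the same set (smooth compactly supported symmetric $2$-tensors) in their respective norms, so the smaller space contains that common dense set.

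Second, with the admissible couple in hand, I would invoke \cite[Corollary 1.7]{Lunardi2009} with $Y = \mathfrak h^{k+\alpha}_\tau$ (the larger space) and $X = \mathfrak h^{l+\beta}_\tau$ (the smaller), obtaining directly
\begin{align*}
\|h\|_{(\mathfrak h^{k+\alpha}_\tau,\;\mathfrak h^{l+\beta}_\tau)_\theta} \leq C(\theta)\,\|h\|^{1-\theta}_{\mathfrak h^{k+\alpha}_\tau}\,\|h\|^{\theta}_{\mathfrak h^{l+\beta}_\tau}
\end{align*}
for all $h \in \mathfrak h^{l+\beta}_\tau$, which is exactly \eqref{eq:a7-1}. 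One should double-check the ordering convention in the notation $(\cdot,\cdot)_\theta$ used in the excerpt versus Lunardi's, but since the paper's own definition of $K(t,h,X,Y) = \inf(\|a\|_X + t\|b\|_Y)$ is stated explicitly just above the lemma, it is a routine matter to match conventions (and if necessary use the symmetry relation $(X,Y)_\theta = (Y,X)_{1-\theta}$ together with $\|h\|^{1-\theta}\|h\|^\theta$ being the same bilinear expression read the other way).

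I expect no serious obstacle: the lemma is essentially a bookkeeping application of a black-box interpolation fact, and the only thing requiring care is the uniform-in-$N$ control of the local seminorm comparison, which works precisely because the annuli $A_N$ have fixed width $4$ and the interior-distance weights $d_x$ are bounded above on each $A_N$ by that same width. The "hard part," such as it is, is simply making sure the weighted sup-structure of $\|\cdot\|_{k+\alpha;\tau}$ — the $\sup_N [f_\tau(N)]^{1/2}(\cdots)$ — is compatible with the local comparison, but since the comparison constant is uniform in $N$ it passes through the weight and the supremum without difficulty. This mirrors \cite[Lemma 4.1]{Wu2013} and the surrounding interpolation statements in \cite{Wu2013}, to which one can also refer.
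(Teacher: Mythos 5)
Your proposal is correct and follows the same route as the paper, which proves this lemma simply by citing the standard interpolation inequality \cite[Corollary 1.7]{Lunardi2009}; you merely add the (true, and worth recording) verification that $\mathfrak h^{l+\beta}_\tau \overset{d}\hooklongrightarrow \mathfrak h^{k+\alpha}_\tau$ with constants uniform in $N$, which the paper leaves implicit. Your convention check at the end also matches the paper's definition of $K(t,h,X,Y)$, where $X$ is the larger space, so the inequality $\sup_{t}t^{-\theta}K(t,h)\le \Vert h\Vert_X^{1-\theta}\Vert h\Vert_Y^{\theta}$ applies directly.
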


We also have an interpolation result that generalizes \cite[Theorem 4.1]{Wu2013}.
\begin{thm}\label{interp}
Under the assumptions of Lemma \ref{a7}, if $(1-\theta)(k+\alpha)+\theta(l+\beta)\notin\NN$, then there is a Banach space isomorphism
\begin{align*}
(\mathfrak h^{k+\alpha}_{\tau},\mathfrak h^{l+\beta}_{\tau})_{\theta}\cong \mathfrak h^{(1-\theta)(k+\alpha)+\theta(l+\beta)}_{\tau},
\end{align*}
with equivalence of the respective norms.
\end{thm}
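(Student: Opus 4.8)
The plan is to reduce Theorem \ref{interp} to the corresponding classical statement for weighted little Hölder spaces of functions on $\RR^n$ (or on a half-space), using the single global coordinate chart guaranteed by $(M,g_0)\in\msc A$. The key structural point is that Definition \ref{weighted-space} is built from the covering $\{A_N\}$ by annuli together with the weight $[f_\tau(N)]^{1/2}$, and that on each $A_N$ the interior-weighted Hölder seminorms $|h_{ij}|'_{q;A_N}$, $[h_{ij}]'_{k+\alpha;A_N}$ are exactly the Hölder seminorms appearing in the theory of \emph{Hölder spaces on domains with weight given by distance to the boundary}, as developed in Lunardi's book \cite{Lunardi2009} (and used in \cite{Wu2013}). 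First I would record the localized interpolation identity: for a fixed bounded annulus $A_N\subset\RR^n$ with its distance-to-boundary weight, Lunardi's results give a Banach space isomorphism, with norm equivalence, between the continuous interpolation space $(h^{k+\alpha}(A_N), h^{l+\beta}(A_N))_\theta$ and $h^{(1-\theta)(k+\alpha)+\theta(l+\beta)}(A_N)$ whenever the interpolated exponent is not an integer; the point of the non-integer hypothesis is precisely that continuous interpolation of Hölder spaces lands on a Hölder space only for non-integer smoothness. Crucially one must track that the constants in this norm equivalence can be taken \emph{uniform in $N$}: this follows because the annuli $A_N=\{N-1<d(x,O)<N+3\}$ all have comparable geometry (fixed width $4$) up to translation, and the coefficients $\Gamma_{ij}^k$ and their derivatives are uniformly bounded on the single chart by Definition \ref{class_A}, so passing from coordinate derivatives $\p^\ell$ to covariant derivatives and back introduces only uniformly bounded factors.

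Next I would globalize. Given the uniform-in-$N$ local isomorphisms, the global weighted norm is, by \eqref{eq:weighted_norm}, a weighted $\ell^\infty$ (sup over $N$) of the local norms, with weight $[f_\tau(N)]^{1/2}$. The $K$-functional for the pair $(\fh^{k+\alpha}_\tau,\fh^{l+\beta}_\tau)$ should then be estimated from above and below by the supremum over $N$ of $[f_\tau(N)]^{1/2}$ times the local $K$-functionals $K(t,h|_{A_N},h^{k+\alpha}(A_N),h^{l+\beta}(A_N))$; the upper bound is immediate from subadditivity of $K$ under the sup-norm assembly, and the lower bound follows because any admissible global decomposition $h=a+b$ restricts to an admissible local decomposition on each $A_N$. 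Combining with the local identities and taking the sup over $N$ (which commutes with the characterization $\limt{t}{0+}t^{-\theta}K(t,\cdot)=0$ because each local little-Hölder membership is, via Lemma \ref{Fht}-type arguments, equivalent to that vanishing-of-modulus-of-continuity condition) yields the claimed identification with $\fh^{(1-\theta)(k+\alpha)+\theta(l+\beta)}_\tau$, with equivalence of norms. One must also check that the interpolation space is exactly the \emph{closure} of compactly supported smooth tensors in the interpolated norm, i.e., the ``little'' (continuous interpolation) rather than the ``big'' space; this is where $\limt{t}{0+}t^{-\theta}K(t,h)=0$, together with Lemma \ref{Fht}, pins down the correct space and is inherited from the local statement.

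The main obstacle I anticipate is the \emph{uniformity in $N$} of the local norm equivalences, and more subtly, controlling the \emph{overlaps} of the annuli: a given point $x$ lies in up to a bounded number of the $A_N$, and $d_{x;A_N}$ depends on $N$, so one must verify that the interior-weight seminorms on overlapping annuli are mutually comparable up to a constant independent of $N$ and $x$ — otherwise the assembly of local $K$-functionals into the global one loses uniformity. This is a bookkeeping issue of the same flavor as the proof of Proposition \ref{dom}, and should follow from the fixed width of the annuli together with the triangle inequality for $d(\cdot,O)$. A secondary technical point is the passage between coordinate derivatives $\p^\ell$ (used in Definition \ref{weighted-space}) and covariant derivatives (natural for the geometric operators), which again is harmless because of the $C_{|\ell|}$ bounds in Definition \ref{class_A} but must be stated carefully so the constants do not depend on $N$. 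Once these uniformity points are in place, the theorem follows by combining the localized classical interpolation identity of \cite{Lunardi2009} with the weighted-$\ell^\infty$ assembly, exactly paralleling — and generalizing — the argument for $(\mathbb{CH}^m,g_B)$ in \cite[Theorem 4.1]{Wu2013}.
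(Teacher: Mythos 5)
Your proposal is correct in outline but organizes the argument differently from the paper. The paper's proof of Theorem \ref{interp} is essentially a deferral: it observes that the proof of \cite[Theorem 4.1]{Wu2013} uses only bounded sectional curvature, the single chart with bounded $\p^\ell\Gamma^i_{jk}$ ($|\ell|\le 2$), and Lemma \ref{Fht}, and then declares the general case a ``straightforward modification'' of \cite[Section 7]{Wu2013}; that argument estimates the global weighted $K$-functional directly, constructing explicit decompositions $h=a_t+b_t$ by mollification in the single coordinate chart with the weights $[f_\tau(N)]^{1/2}$ and $d_{x;A_N}$ carried along, and uses the interpolation inequality of Lemma \ref{a7} for the reverse inclusion. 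You instead prove a local interpolation identity on each annulus $A_N$ (quoting the boundary-distance-weighted H\"older interpolation theory of \cite{Lunardi2009}), establish uniformity of the constants in $N$, and then assemble the local $K$-functionals into the global one. This localize-and-glue route is legitimate and arguably more modular, at the price of having to verify (i) that Lunardi's domain results apply with uniform constants to geodesic annuli of a manifold in $\msc A$ (this is where bounded curvature, infinite injectivity radius, and the $C_{|\ell|}$ bounds on $\Gamma$ enter, replacing the explicit $\mathbb{CH}^m$ computations), and (ii) the gluing itself.

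One step is understated. In the comparison of $K$-functionals, the direction $\sup_N [f_\tau(N)]^{1/2}K_{\mathrm{loc},N}(t,h|_{A_N})\le K_{\mathrm{glob}}(t,h)$ is the one that is immediate (restrict a global decomposition); the reverse inequality, which you call ``immediate from subadditivity,'' is the nontrivial one: it requires patching near-optimal local decompositions with a partition of unity $\{\chi_N\}$ subordinate to $\{A_N\}$. For that to work with the interior weights $d_{x;A_N}$, the cutoffs must be supported a definite distance from $\p A_N$ (otherwise the bound $d_{x;A_N}^q|\p^q a_N|\lesssim 1$ gives no control of $|\p^q a_N|$ where $\chi_N\ne 0$), and one must check that derivatives falling on $\chi_N$ are absorbed uniformly in $N$. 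You do flag the overlap/uniformity issue as the main obstacle, and it is indeed only bookkeeping once the cutoffs are chosen correctly, but the direction of the ``easy'' versus ``hard'' inequality should be corrected. Likewise, passing the little-H\"older condition $\lim_{t\to 0^+}t^{-\theta}K(t,\cdot)=0$ through the sup over $N$ requires the uniform-in-$N$ statement of Lemma \ref{Fht}, not merely the local vanishing on each annulus; you cite the right lemma, so this is a matter of phrasing rather than a gap.
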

\begin{proof}
Close examination of the proof of \cite[Theorem 4.1]{Wu2013} shows that we only use the following facts: the boundedness of sectional curvatures of $g_0$, the existence of a single coordinate chart in which $\p^\ell \Gamma^{i}_{jk}$ are bounded by $C_{|\ell|}$ for $|\ell| = 0,1,2$, and Lemma \ref{Fht}. Then the proof for $(M, g_0)\in\mathscr{A}$ is a straightforward modification of that in \cite[Section 7]{Wu2013}, and we omit the details here.
\end{proof}

\begin{remark}
Theorem \ref{interp} also holds for the spaces $\mfr h^{k+\alpha}(B_R)$, $\mfr h^{l+\beta}(B_R)$.
\end{remark}

\begin{remark}
If we let $|\ell|=0,1,\ldots, L$ in Definition \ref{class_A}, then we can set $l \leq L$ in Lemma \ref{a7} and Theorem \ref{interp}.
\end{remark}

Consequently, $\mathfrak h^{l+\beta}_\tau\overset{d}\hooklongrightarrow (\mathfrak h^{k+\alpha}_\tau,\mathfrak h^{l+\beta}_\tau)_\theta \overset{d}\hooklongrightarrow \mathfrak h^{k+\alpha}_\tau$ for $l\geq k$, $\beta\geq\alpha$.


\section{Dynamical stability}
Let $(M, g_0)\in\msc A$, and assume that $g_0$ is a fixed point of the curvature-normalized Ricci flow \eqref{eq:cnrf}. Then $g_0$ is also a fixed point of the curvature-normalized Ricci-DeTurck flow \eqref{eq:cnrdf} with the DeTurck term $P_{g_0}(g)$. If $g_0$ is linearly stable, then we will show that $g_0$ is also dynamically stable.

\begin{remark}
The strategy of proof is the same as that in \cite{Wu2013}. Henceforth, we again follow the notation in \cite{Wu2013}.
\end{remark}


\subsection{Dynamical stability of an Einstein metric} 
When $X$ is a Killing field in equation \eqref{eq:cnrdf}, the fixed point $g_0$ is Einstein. In this subsection, the quantities $|\cdot|$, $\Vert\cdot\Vert$, $\Gamma_{ij}^k$, and the distance function $d$ are computed using the fixed background metric $g_0$, and $f_\tau$ is given in Lemma \ref{f_tau}. Our goal is to prove Theorem \ref{thm:stab-einstein}.

Fix $0<\sigma<\rho<1$ and consider the following sequence of densely embedded spaces:
\begin{align*}
\begin{array}{ccccccc}
\mathbb X_1 & \overset{d}\hooklongrightarrow & \mathbb E_1 & \overset{d}\hooklongrightarrow &\mathbb X_0 & \overset{d}\hooklongrightarrow & \mathbb 
E_0\\
\|	& & \| & & \| & & \| \\
\mathfrak h^{2+\rho}_\tau & & \mathfrak h^{2+\sigma}_\tau & &\mathfrak h^{0+\rho}_\tau& & \mathfrak h^{0+\sigma}_\tau\\
\end{array}
\end{align*}
For fixed $\frac{1}{2}\leq\beta<\alpha<1-\frac{\rho}{2}$, define
\begin{align*}
\mathbb X_\alpha:=(\mathbb X_0,\mathbb X_1)_\alpha
\quad \text{and} \quad 
\mathbb X_\beta:=(\mathbb X_0,\mathbb X_1)_\beta.
\end{align*}
Then by Theorem \ref{interp}, 
\[ \XX_\alpha\cong\mathfrak h^{2\alpha+\rho}_\tau
\quad \text{and} \quad
\XX_\beta\cong \mathfrak h^{2\beta+\rho}_\tau \]
with equivalence of the respective norms. Note that 
\[ \XX_\alpha \overset{d}\hooklongrightarrow \XX_\beta \overset{d}\hooklongrightarrow \mathfrak h^{1+\rho}_\tau. \]

The modified Ricci flow \eqref{eq:cnrdf} can be rewritten as
\begin{align*}
\p_t g = Q_{g_0}(g)g,
\end{align*}
where $Q_{g_0}(g)$ is a quasilinear elliptic operator.  The following lemma is proved in \cite[Lemma 3.1]{GuentherIsenbergKnopf2002} or \cite[Lemma 5.1]{Wu2013}.

\begin{lemma}\label{coef}
If we express $Q_{g_0}(g)g$ in terms of the first and second derivatives of the Einstein metric $g$ in local coordinates, then
\begin{align}\label{eq:coef}
(Q_{g_0}(g)g)_{ij}& =a\left(g_0(x),g_0^{-1}(x),g(x),g^{-1}(x)\right)_{ij}^{k l pq}\p_p\p_qg_{k l}\\
&\quad + b\left(g_0(x),g_0^{-1}(x),\p g_0(x),g(x),g^{-1}(x),\p g(x)\right)_{ij}^{k l p}\p_p g_{kl} \notag \\
&\quad\quad +c\left(g_0^{-1}(x),\p g_0(x),\p^2 g_0(x),g(x)\right)_{ij}^{kl}g_{kl}. \notag
\end{align}
The coefficients $a, b, c$ are analytic functions of their arguments and depend smoothly on $x\in M$.
\end{lemma}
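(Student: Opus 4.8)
The plan is to verify \eqref{eq:coef} by expanding, in the single coordinate chart of $(M,g_0)\in\msc A$, the four summands of
\[ Q_{g_0}(g)g = -2\rc(g) + 2\lambda g + \mcl L_X g - P_{g_0}(g), \]
keeping careful track of how many derivatives land on the variable metric $g$; this is precisely the computation behind \cite[Lemma 3.1]{GuentherIsenbergKnopf2002} and \cite[Lemma 5.1]{Wu2013}, which I outline. From the coordinate formula for the Ricci tensor, $-2R_{ij} = g^{pq}\partial_p\partial_q g_{ij} + g^{pq}\big(\partial_i\partial_j g_{pq} - \partial_i\partial_q g_{jp} - \partial_j\partial_q g_{ip}\big) + g^{-1}\ast g^{-1}\ast\partial g\ast\partial g$, so every second-order term of $-2\rc(g)$ carries a coefficient that is algebraic in $g$ and $g^{-1}$, while the remaining terms are quadratic in $\partial g$ and are rewritten in the form $b\,\partial g$ by absorbing one factor of $\partial g$ into $b$. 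The term $2\lambda g$ is of pure zeroth order. Writing $(\mcl L_X g)_{ij} = X^k\partial_k g_{ij} + (\partial_i X^k)g_{kj} + (\partial_j X^k)g_{ik}$, the first piece is first-order in $g$ with coefficient $X$ and the last two are zeroth-order with coefficient $\partial X$; since $X$ is smooth these are absorbed into the smooth $x$-dependence of $b$ and $c$ respectively.

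Next I would expand the DeTurck term $P_{g_0}(g) = -2\delta^\ast_{g_0}\big(\tilde g_0\,\delta_{g_0}(G(g,g_0))\big)$. Here $\delta_{g_0}$ and $\delta^\ast_{g_0}$ are first-order operators whose coefficients are algebraic in $g_0,g_0^{-1}$ and linear in the Christoffel symbols $\Gamma(g_0) = g_0^{-1}\ast\partial g_0$, and $\tilde g_0:\beta\mapsto g\,g_0^{-1}\beta$ is a pointwise multiplication, algebraic in $g$ and $g_0^{-1}$. Since $\nabla^{g_0}g_0 = 0$, the $g_0$-summand of $G(g,g_0) = g_0 - \tfrac12(\tr_g g_0)g$ drops out of $\delta_{g_0}(G(g,g_0))$, so $P_{g_0}(g)$ involves $g$ only through the tensor $\tfrac12(\tr_g g_0)g$, which is algebraic in $g$, $g^{-1}$ and $g_0$. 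Composing $\delta^\ast_{g_0}\circ\tilde g_0\circ\delta_{g_0}$ and sorting by the number of derivatives surviving on $g$: the top-order contribution is $g_0^{-1}\ast g_0^{-1}\ast(\text{algebraic in }g,g^{-1},g_0)\ast\partial^2 g$ and so adds to the coefficient $a$; the terms in which exactly one derivative remains on $g$---including those where a $\partial$ falls on a factor $\Gamma(g_0)$ or on $g^{-1}$---add to $b$ with coefficients that additionally involve $\partial g_0$ and $\partial g$; and the terms in which no derivative remains on $g$ involve at most $\partial^2 g_0$ and add to $c$. Collecting the three groups across all four summands produces the decomposition \eqref{eq:coef}.

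It then remains to check analyticity: each of $a$, $b$, $c$ is built from the components of the tensors $g_0$, $g_0^{-1}$, $\partial g_0$, $\partial^2 g_0$, $g$, $g^{-1}$, $\partial g$ (and $X,\partial X,\lambda$) by finitely many sums, products and index contractions together with the inversion maps $g\mapsto g^{-1}$ and $g_0\mapsto g_0^{-1}$, which are real-analytic on the open set of invertible symmetric matrices (in particular on positive-definite ones); hence $a,b,c$ are analytic in their tensorial arguments, and since the fixed data $g_0$, $X$ and $\lambda$ are smooth on $M$ by the hypothesis $(M,g_0)\in\msc A$, the coefficients depend smoothly on $x$. The only real work is the bookkeeping in the previous paragraph---ensuring, as one commutes derivatives past the metric factors in $\rc$ and in $P_{g_0}$, that no derivative of $g_0$ is left multiplying $\partial^2 g$ and no derivative of $g$ is left in the zeroth-order coefficient---and this is exactly the content of the referenced lemmas, whose calculation I would either reproduce or simply cite.
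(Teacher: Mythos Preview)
Your proposal is correct and follows exactly the approach the paper takes: the paper does not give an independent proof but simply cites \cite[Lemma 3.1]{GuentherIsenbergKnopf2002} and \cite[Lemma 5.1]{Wu2013}, and what you have written is a faithful outline of the computation carried out there. Your inclusion of the $\mcl L_X g$ term (absorbed into the smooth $x$-dependence of $b$ and $c$) is slightly more than those references contain, since they treat the case $X=0$, but it is harmless and indeed anticipates the extra bookkeeping the paper performs separately in the soliton case.
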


For a fixed $\hat g\in \XX_\beta$, we can view $Q_{g_0}(\hat g)$ as a linear operator on $\XX_1$. If $h\in\XX_1$, then
\begin{align*}
(Q_{g_0}(\hat g)h)_{ij}& =a\left(g_0(x),g_0^{-1}(x),\hat g(x),\hat g^{-1}(x)\right)_{ij}^{kl pq}\p_p\p_q h_{kl}\\
&\quad + b\left(g_0(x),g_0^{-1}(x),\p g_0(x),\hat g(x), \hat g^{-1}(x),\p \hat g(x)\right)_{ij}^{kl p}\p_p h_{kl} \\
&\quad\quad +c\left(g_0^{-1}(x),\p g_0(x),\p^2 g_0(x), \hat g(x)\right)_{ij}^{kl} h_{kl}.
\end{align*}
In fact, $Q_{g_0}(\hat g)$ is a second order elliptic operator with bounded coefficients. Since the coefficient $a$ depends analytically on $\hat g$, if $\hat g$ is sufficiently close to $g_0$ in $\XX_\beta$, then $Q_{g_0}(\hat g)$ will remain elliptic with bounded coefficients. We call such a $\hat g$ an \emph{admissible perturbation} of $g_0$.

For $0<\epsilon\ll 1$ to be chosen in Lemma \ref{analytic}, we define an open set in $\XX_\beta$ by
\begin{align*}
\mathbb \GG_\beta := \{g\in\mathbb X_\beta \text{ is an admissible perturbation}: g>\epsilon g_0\},
\end{align*}
where ``$g>\epsilon g_0$'' means $g(X,X)>\epsilon g_0(X,X)$ for any tangent vector $X$. We also define
\begin{align*}
\mathbb \GG_\alpha := \mathbb \GG_\beta\cap\mathbb X_\alpha.
\end{align*}
 
We denote by $ L_{\hat g}:=Q_{g_0}(\hat g)$ the unbounded linear operator on $\mathbb X_0$ with dense domain $D(L_{\hat g}):=\mathbb X_1$. We extend $L_{\hat g}$ to $\hat L_{\hat g}$, which is now defined on $\mathbb E_0$ with dense domain $D(\hat L_{\hat g}):=\EE_1$. If $X,Y$ are two Banach spaces, we denote by $\mathcal L(X,Y)$ the space of bounded linear operators from $X$ to $Y$.

\begin{lemma}\label{bddop1}
The operators $L_{\hat g}, \hat L_{\hat g}$ satisfy the following properties.
\bn
\item $\hat g\mapsto L_{\hat g}$ is an analytic map $\GG_\beta \to \mathcal L(\XX_1,\XX_0)$.
\item $\hat g\mapsto \hat L_{\hat g}$ is an analytic map $\GG_\alpha \to \mathcal L(\EE_1,\EE_0)$.
\en 
\end{lemma}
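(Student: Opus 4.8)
The plan is to reduce both analyticity statements to the structural description of the coefficients provided by Lemma \ref{coef}, together with the algebraic operations (composition, matrix inversion) under which analyticity into a Banach algebra is preserved. First I would observe that $Q_{g_0}(\hat g)$ is built from the coefficient functions $a, b, c$ of \eqref{eq:coef}, which are analytic functions of the listed arguments. The dependence on $\hat g$ enters only through $\hat g(x)$, $\hat g^{-1}(x)$, and $\partial \hat g(x)$; the remaining arguments $g_0, g_0^{-1}, \partial g_0, \partial^2 g_0$ are fixed, bounded functions on $M$ (bounded because $(M,g_0)\in\msc A$, using Definition \ref{class_A}). So the map $\hat g \mapsto L_{\hat g}$ factors as: first form the tuple $(\hat g, \hat g^{-1}, \partial \hat g)$, then postcompose with the analytic functions $a,b,c$, then read off the resulting second-order operator. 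The point $g_0$ itself lies in $\GG_\beta$ (it is trivially an admissible perturbation with, say, $\epsilon = 1/2$), so the domain is a genuine neighborhood of the fixed point.

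The key steps, in order, are as follows. \textbf{Step 1:} The inclusion-type maps $\hat g \mapsto \hat g$ and $\hat g \mapsto \partial \hat g$ are bounded linear, hence analytic, from $\GG_\beta \subset \mathfrak h^{2\beta+\rho}_\tau$ into $\mathfrak h^{0+\rho}_\tau$-type spaces; this uses $2\beta + \rho \geq 1 + \rho$, so one derivative can be taken with values in $\mathfrak h^{(2\beta-1)+\rho}_\tau \hookrightarrow \mathfrak h^{0+\rho}_\tau$ by the dense embeddings recorded after Theorem \ref{interp}. \textbf{Step 2:} The map $\hat g \mapsto \hat g^{-1}$ is analytic on the open set where $\hat g > \epsilon g_0$, because matrix inversion $A \mapsto A^{-1}$ is a real-analytic map on invertible matrices and the little Hölder spaces (with the $\mathfrak h^{0+\rho}_\tau$-weight structure, which at each fixed annulus $A_N$ behaves like an ordinary $C^{0+\rho}$ norm) form a Banach algebra under pointwise multiplication; the weight $[f_\tau(N)]^{1/2}$ is handled because each factor is estimated annulus-by-annulus and the weight appears once. \textbf{Step 3:} Composition of an analytic function germ (here $a$, $b$, or $c$ near the value of its arguments at the base point) with an analytic Banach-space-valued map is analytic; this is the standard substitution/Omega-lemma-type fact, valid since the little Hölder spaces are Banach algebras and the arguments stay in a bounded region. \textbf{Step 4:} Assemble: the coefficients of $L_{\hat g}$ depend analytically on $\hat g \in \GG_\beta$ with values in $\mathfrak h^{0+\rho}_\tau$ (for the top-order coefficient $a$) and in spaces that multiply $\mathfrak h^{2+\rho}_\tau$ into $\mathfrak h^{0+\rho}_\tau$; since multiplication $\mathfrak h^{0+\rho}_\tau \times \mathfrak h^{2+\rho}_\tau \to \mathfrak h^{0+\rho}_\tau$ is continuous bilinear, the induced operator $L_{\hat g} \in \mathcal L(\XX_1, \XX_0)$ depends analytically on $\hat g$. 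This proves (1). For (2), I would run the identical argument with $\GG_\alpha \subset \XX_\alpha \cong \mathfrak h^{2\alpha+\rho}_\tau$ in place of $\GG_\beta$ and with the pair $(\EE_1, \EE_0) = (\mathfrak h^{2+\sigma}_\tau, \mathfrak h^{0+\sigma}_\tau)$ in place of $(\XX_1,\XX_0)$; nothing changes except the Hölder exponents, and one checks $2\alpha + \rho \geq 1 + \sigma$ (true since $\alpha \geq 1/2$ and $\sigma < \rho$) so that Step 1 still applies with values in $\mathfrak h^{0+\sigma}_\tau$.

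The main obstacle I expect is Step 2 together with the Banach-algebra property of the \emph{weighted} little Hölder spaces: one must verify that pointwise products are controlled in the $\Vert\cdot\Vert_{k+\alpha;\tau}$-norm with only a single power of the weight $[f_\tau(N)]^{1/2}$, and that the weighted seminorms $|\cdot|'_{q;A_N}$, $[\cdot]'_{k+\alpha;A_N}$ (which carry the extra boundary-distance factors $d_x^q$, $d_{x,y}^{k+\alpha}$) behave submultiplicatively; this is a bookkeeping exercise analogous to the one carried out in \cite{Wu2013}, and I would cite the relevant estimates there rather than reprove them. A secondary subtlety is that $\hat g^{-1}$ and $\partial \hat g$ are only needed as inputs to $a, b, c$ at the regularity of $\mathfrak h^{0+\rho}_\tau$ (not at full $\mathfrak h^{2+\rho}_\tau$), so one should be careful to apply $\partial$ to $\hat g$ with a loss of exactly one derivative and keep the remaining regularity as slack — the inequality $2\beta - 1 + \rho > \rho - 1 \geq 0$ (respectively $2\alpha - 1 + \rho > 0$) guarantees this works, and these are exactly the constraints $\beta \geq 1/2$, $\alpha \geq 1/2$ built into the setup.
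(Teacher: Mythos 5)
Your argument is correct and is essentially the proof the paper intends: the paper's own "proof" is a one-line deferral to \cite{Wu2013}*{Lemma 6.1}, which proceeds exactly as you describe --- reading the coefficients off Lemma \ref{coef}, using analyticity of matrix inversion and of substitution into the analytic functions $a,b,c$, and the (weighted) little H\"older spaces' algebra and multiplication properties to conclude analyticity of $\hat g\mapsto L_{\hat g}$ and $\hat g\mapsto\hat L_{\hat g}$.
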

\begin{proof}
The proof is identical to that of \cite[Lemma 6.1]{Wu2013}.
\end{proof}

\begin{defn}\label{sect_op}
Given a Banach space $X$, a linear operator $A:D(A)\subset X\to X$ is called \emph{sectorial} if there are constants $\omega\in\RR$, $\beta\in(\pi/2, \pi)$, and $C>0$ such that 
\bn
\item The resolvent set $\rho(A)$ contains 
\begin{align*}
S_{\beta,\omega}:=\{\eta\in\CC:\eta\neq\omega,|\arg(\eta-\omega)|<\beta\}, 
\end{align*}
\item $\Vert (\eta I-A)^{-1}\Vert_{\mathcal L(X,X)}\leq \frac{C}{|\eta-\omega|}, \text{ for all }\eta\in S_{\beta,\omega}$.
\en
\end{defn}

We also have the following lemma.

\begin{lemma}\label{analytic}
The operator $\hat L_{g_0}$ is sectorial, and there exists $\epsilon >0$ in the definition of $\GG_\beta$ such that for each $\hat g \in \GG_\alpha$, the operator $\hat L_{\hat g}$ generates a strongly continuous analytic semigroup on $\mathcal L(\EE_0,\EE_0)$.
\end{lemma}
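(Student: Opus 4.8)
\textbf{Proof plan for Lemma \ref{analytic}.} The plan is to verify the two assertions separately. For sectoriality of $\hat L_{g_0}$, the point is that $\hat L_{g_0} = Q_{g_0}(g_0)$ is, by Lemma \ref{coef}, a second-order strongly elliptic operator acting on symmetric $2$-tensors (equivalently, a weakly coupled elliptic system) with coefficients that are bounded together with their relevant derivatives, thanks to $(M,g_0)\in\msc A$ (bounded Christoffel symbols and their first two derivatives, bounded geometry, infinite injectivity radius). First I would invoke the standard generation theory for elliptic operators on little-H\"older spaces: on the scale $\EE_0 = \mathfrak h^{0+\sigma}_\tau$ with domain $\EE_1 = \mathfrak h^{2+\sigma}_\tau$, a uniformly elliptic second-order operator with coefficients in the appropriate H\"older class is sectorial and generates an analytic semigroup. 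The cited references in the excerpt (Lunardi, Amann, Simonett) and the parallel argument in \cite[Lemma 6.1, Lemma 6.2]{Wu2013} handle exactly this situation; the only additional bookkeeping is to check that the weight $f_\tau$ in Definition \ref{weighted-space} does not obstruct the resolvent estimates, which follows because the weight is a fixed function of the spatial variable and the resolvent construction via interior Schauder estimates plus a partition-of-unity patching (over the annuli $A_N$) is local in nature, so the weighted norm of the resolvent is controlled by the unweighted local estimates uniformly in $N$.

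Next I would address the perturbative statement: that for $\epsilon>0$ small enough in the definition of $\GG_\beta$, each $\hat L_{\hat g}$ with $\hat g\in\GG_\alpha$ still generates an analytic semigroup on $\mathcal L(\EE_0,\EE_0)$. The mechanism is stability of sectoriality under small perturbations in the operator norm $\mathcal L(\EE_1,\EE_0)$. By Lemma \ref{bddop1}(2), $\hat g\mapsto\hat L_{\hat g}$ is analytic $\GG_\alpha\to\mathcal L(\EE_1,\EE_0)$, hence continuous, so $\|\hat L_{\hat g}-\hat L_{g_0}\|_{\mathcal L(\EE_1,\EE_0)}$ is as small as we like by shrinking the $\XX_\beta$-neighborhood, i.e. by choosing $\epsilon$ small (the admissibility condition $g>\epsilon g_0$ keeps $\hat g$ uniformly positive-definite so that $\hat L_{\hat g}$ stays uniformly elliptic with bounded coefficients). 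A standard Neumann-series / perturbation lemma for sectorial operators — the class of generators of analytic semigroups is open in $\mathcal L(\EE_1,\EE_0)$ when $\EE_1\hookrightarrow\EE_0$ densely and continuously — then gives that each $\hat L_{\hat g}$ is sectorial with uniform constants, hence generates a strongly continuous analytic semigroup. Here I would use the fact that the domains $D(\hat L_{\hat g})=\EE_1$ are all the same and that $\EE_1\overset{d}\hooklongrightarrow\EE_0$; density/strong continuity of the semigroup is automatic on the little-H\"older (rather than full H\"older) scale, which is precisely why the little-H\"older spaces were introduced.

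The main obstacle I expect is the first part: making the weighted resolvent estimate genuinely uniform over the annular cover. One must show that the constant $C$ in Definition \ref{sect_op}(2) does not degenerate as $N\to\infty$, i.e. that solving $(\eta I - \hat L_{g_0})u = \varphi$ and estimating $\|u\|_{\EE_0}$ in the weighted norm \eqref{eq:weighted_norm} can be reduced to local resolvent estimates on each $A_N$ with $N$-independent constants. This works because $(M,g_0)\in\msc A$ has bounded geometry in a single chart, so the elliptic constants and Schauder constants are uniform across the $A_N$; the weight $[f_\tau(N)]^{1/2}$ factors out of each local estimate identically, and the overlap structure of the annuli has bounded multiplicity, so the supremum over $N$ of the local estimates reproduces the global weighted estimate. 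Once this uniformity is in hand, everything else is the standard machinery, and I would simply cite \cite[Lemma 6.2]{Wu2013} together with \cite{Lunardi2009} for the details rather than reproducing them.
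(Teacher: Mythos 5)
Your proposal is correct in outline and its second half (the perturbation step) is essentially identical to the paper's: both invoke Lemma \ref{bddop1}(2) to make $\Vert \hat L_{\hat g}-\hat L_{g_0}\Vert_{\mathcal L(\EE_1,\EE_0)}$ small by shrinking $\epsilon$, and then cite the stability of sectoriality under small relative perturbations (the paper uses \cite[Proposition 2.4.2]{Lunardi1995}). The genuine difference is in how the resolvent set containment for $\hat L_{g_0}$ is obtained. You propose to get it from general elliptic generation theory: construct $(\eta I-\hat L_{g_0})^{-1}$ by localization over the annuli $A_N$, using the bounded geometry of $(M,g_0)\in\msc A$ to make the Schauder constants $N$-independent and observing that the weight $[f_\tau(N)]^{1/2}$ factors out of each local estimate. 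The paper instead exploits the standing hypothesis of strict linear stability: it first shows that $\hat L_{g_0}$ differs from the linearization $\mbf{L}$ by a bounded operator (by comparing the Taylor expansion of $Q$ at $g_0$ with the frozen-coefficient expression), deduces $\spec(\hat L_{g_0})\subset(-\infty,\eta_0)$ and hence that $\eta I-\hat L_{g_0}:\EE_1\to\EE_0$ is a topological isomorphism for $\Re(\eta)\geq\eta_0$, and only then uses the uniform Schauder estimates over the $A_N$ (exactly the uniformity you flag as the main obstacle) to upgrade this to the quantitative resolvent bound required by Definition \ref{sect_op}, citing \cite[Theorem 1.2.2]{Amann1995}. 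Your route has the merit of not needing the linear stability hypothesis for this lemma at all, but it pushes the real work into the patching construction of the resolvent in the weighted spaces, which you would have to carry out (commutator errors with the cutoffs, absorption for large $|\eta|$) rather than cite; the paper's route is shorter precisely because invertibility of $\eta I-\hat L_{g_0}$ comes for free from the spectral bound, leaving only the a priori estimate to be checked. Neither approach has a gap for the statement as given.
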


\begin{proof}
We first claim that the linear operators $\hat{L}_{g_0}$ and $\mbf{L}$ differ by a bounded amount. Indeed, recall that $Q(g)$ is the quasilinear operator on the right hand side of equation \eqref{eq:cnrdf}, and $Q(g_0)=0$ since $g_0$ is a fixed point. Let $g=g_0+\tilde h$, where $\Vert \tilde h\Vert_{\XX_\alpha}\leq 1$ and $\tilde h$ is assumed to be smooth. Then we have by linearization
\begin{align*}
Q(g_0 + \tilde h) & = Q(g_0) + DQ|_{g_0} (\tilde h) + O\left(\Vert \tilde h \Vert^2\right)\\
	 		& = \mbf{L} (\tilde h) + O\left(\Vert \tilde h \Vert^2\right),
\end{align*}
and by freezing the coefficients
\begin{align*}
Q(g_0 + \tilde h)=Q_{g_0}(g)(g_0+ \tilde h)=\hat L_g (g_0) + \hat L_{g}(\tilde h).
\end{align*}
It follows that
\begin{align*}
\hat L_g \tilde h & = \mbf{L} (\tilde h) - \hat L_{g} (g_0) + O\left(\Vert\tilde h\Vert^2\right)\\
& = \mbf{L}(\tilde h) + O\left(\Vert\tilde h\Vert^2\right)
\end{align*}
since $ \hat L_{g_0} (g_0) = Q_{g_0}(g_0)(g_0)=Q(g_0)=0$. So there exists a constant $C>0$ such that
\begin{align*}
\mbf{L}(\tilde h) - C \tilde h \leq \hat L_g (\tilde h) \leq \mbf{L}(\tilde h) + C \tilde h,
\end{align*}
which proves the claim. Then the linear stability of $\mbf{L}$ (cf.~\eqref{eq:lin-stab}) implies the spectrum bound $\spec(\hat L_{g_0})\subset (-\infty,\eta_0)$ for some $\eta_0\in\RR$, and hence $\eta I - \hat L_{g_0}$ is a topological linear isomorphism from $\EE_1$ onto $\EE_0$ whenever $\Re(\eta) \geq \eta_0$.

Since $\hat L_{g_0}$ is a linear second order elliptic operator with bounded coefficients, the standard Schauder estimates for $\hat L_{g_B}$ on $A_N$ imply that
\begin{align*}
 \sum\limits_{\ell=0}^{2}|h|'_{\ell; A_N}+[h]'_{2+\sigma; A_N} \leq C\left ( |\hat L_{g_B} h|'_{0; A_N}+[\hat L_{g_B} h]'_{\sigma; A_N} + |h|'_{0;A_N} \right),
\end{align*}
where $C$ is a constant independent of $N$; see \cite{GilbargTrudinger2001}. Multiplying both sides of this inequality by $\left[f_\tau(N)\right]^{1/2}$ and taking the supremum over $N\in\NN$, we have
\begin{align*}
 \Vert h \Vert_{2+\sigma;\tau} & \lesssim \Vert \hat L_{g_0} h\Vert_{0+\sigma;\tau} + \Vert h \Vert_{0;\tau}.
\end{align*}
Then for every $h\in\EE_1=D(\hat L_{g_0})$, applying the Schauder estimates to the operator $\eta I - \hat{L}_{g_0}$, we have
\begin{align*}
\Vert h \Vert_{2+\sigma;\tau} & \lesssim \Vert (\eta I - \hat L_{g_0}) h\Vert_{0+\sigma;\tau},
\end{align*}
where we have used $\mfr h^{0+\sigma}_\tau\overset{d}\hooklongrightarrow \mfr h^{0}_{\tau}$ and $(\eta I - \hat L_{g_0})^{-1}\in\mathcal L(\EE_0, \EE_0)$.
This suffices to establish that $\hat L_{g_0}$ is sectorial by \cite[Theorem 1.2.2 and Remark 1.2.1 (a)]{Amann1995} (also see \cite[Lemma 3.4]{GuentherIsenbergKnopf2002}). Furthermore, 
$\hat L_{g_0}$ is densely defined by construction, so $\hat L_{g_0}$ generates a strongly continuous analytic semigroup by a standard characterization \cite[p.~34]{Lunardi1995}.

By part (2) of Lemma \ref{bddop1}, we can choose $\epsilon >0$ in the definition of $\GG_\beta$ so small that for $\hat g\in \GG_\alpha$, we have
\begin{align*}
 \Vert \hat L_{\hat g}-\hat L_{g_0} \Vert_{\mathcal L(\EE_1,\EE_0)} < \frac{1}{C+1}
\end{align*}
for the constant $C>0$ in the definition of sectorial operator corresponding to $\hat L_{g_0}$. So the perturbation $\hat L_{\hat g}$ is sectorial by \cite[Proposition 2.4.2]{Lunardi1995}, and hence generates a strongly continuous analytic semigroup on $\mathcal L(\EE_0,\EE_0)$.
\end{proof}

\begin{proof}[Proof of Theorem \ref{thm:stab-einstein}]
Using Lemmata \ref{bddop1} and \ref{analytic}, Theorem \ref{thm:stab-einstein} follows from Simonett's stability theorem (Theorem \ref{stab}) by the same argument as that for \cite[Theorem 1.2]{Wu2013}.
\end{proof}


\subsection{Dynamical stability of an algebraic Ricci soliton}\label{subsec:dynamical-soliton} 

Now we study an algebraic Ricci soliton on a simply connected solvable Lie group, $(S, g_0)$, where $\dim S = n$.  On the Lie algebra level, this satisfies
\[ \rc(g_0) = \lambda \, \id + D \]
for some $\lambda < 0$ and $D \in \mrm{Der}(\mfr{s})$.  This condition implies the existence of a vector field $X_0$ such that $g_0$ is a Ricci soliton in the usual sense (e.g., see Section 2 of \cite{Lauret2011-sol}):
\[ \rc(g_0) = \lambda g_0 + \half \mcl{L}_{X_0} g_0. \]

We have the following lemma on the growth of the vector field $X_0$.
\begin{lemma}
The vector field $X_0$ associated to an algebraic soliton grows linearly.
\end{lemma}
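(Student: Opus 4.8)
The plan is to show that $X_0$ grows at most linearly by making the soliton vector field explicit in terms of the Lie algebra data, and then estimating its components in the single coordinate chart on $S \cong \mathbb{R}^n$. First I would recall the standard construction (e.g.~from Section 2 of \cite{Lauret2011-sol} or \cite{GuentherIsenbergKnopf2006}): the derivation $D \in \mrm{Der}(\mfr{s})$ appearing in $\rc(g_0) = \lambda\id + D$ determines a one-parameter family of automorphisms $\phi_t = \exp(tD)$ of $S$, and the vector field $X_0$ is (up to sign and normalization) the infinitesimal generator of the corresponding flow on $S$, i.e.~$X_0 = \frac{d}{dt}\big|_{t=0} \phi_t$. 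Equivalently, at the level of the Lie algebra, the self-adjoint part of $D$ acts on $\mfr{s}$ and, via the exponential coordinates (or a global chart adapted to the solvable group structure), $X_0$ is realized as a vector field whose coordinate components are linear combinations of the coordinate functions with constant coefficients coming from the matrix entries of $D$.

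The key steps, in order, would be: (i) fix the single global coordinate chart on $S \cong \mathbb{R}^n$ (the one from Definition \ref{class_A}, which exists since $g_0 \in \msc A$ by the discussion following Theorem \ref{thm:linear}), chosen compatibly with the exponential map so that left-invariant vector fields and the automorphism flow are expressible in closed form; (ii) write $\mcl{L}_{X_0} g_0 = 2\rc(g_0) - 2\lambda g_0 = 2D$ interpreted as a symmetric endomorphism, and unwind the definition of $X_0$ as the generator of $\exp(tD)$ acting on $S$; (iii) in the chosen chart, express the components $(X_0)^i(x)$ explicitly and observe they are (at most) affine-linear in the coordinates $x = (x^1,\dots,x^n)$, hence $|X_0(x)|_{g_0} \lesssim 1 + |x|$; (iv) finally relate $|x|$ to the geodesic distance $d(x,O)$: since $g_0 \in \msc A$ has bounded geometry (bounded Christoffel symbols and their derivatives in this chart), the Euclidean coordinate norm and $d(\cdot,O)$ are comparable up to the usual polynomial/exponential factors, and in any case $|x| \lesssim d(x,O)$ near infinity suffices to conclude $|X_0(x)|_{g_0} \lesssim 1 + d(x,O)$, which is the asserted linear growth.

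The main obstacle I anticipate is step (iii)–(iv): making the identification of $X_0$ with the automorphism-generating vector field genuinely explicit in a chart, and controlling how the coordinate functions grow relative to $g_0$-geodesic distance. On a general solvable Lie group the exponential coordinates need not be globally nice, and the automorphism flow $\exp(tD)$ need not commute with the Lie group structure in an obvious way; one may instead need to use the semidirect product decomposition $\mfr{s} = \mfr{n} \rtimes \mfr{a}$ and argue separately on the nilradical factor (where $D$ restricts and the relevant coordinates grow at controlled rates) and on the abelian factor. The bounded-geometry hypothesis $g_0 \in \msc A$ is exactly what rescues this: it forces $|dx|$ and $|\cdot|_{g_0}$ to be uniformly comparable in the chart, so that the a priori linear growth of $X_0$ in coordinates transfers to linear growth in $g_0$-distance. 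Everything else—the soliton identity, the passage from $D$ to a flow, the linear dependence of the generator on coordinates—is standard and I would cite \cite{Lauret2011-sol, GuentherIsenbergKnopf2006, Jablonski2011-hom} rather than re-derive it.
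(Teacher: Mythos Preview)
Your approach is different from the paper's and, as written, has a gap in step (iv).

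The paper's argument is intrinsic and much shorter: it differentiates the soliton equation along a unit-speed geodesic $\gamma$ from the identity. Using $\nabla_{\dot\gamma}\dot\gamma = 0$ one computes
\[
\rc(g_0)(\dot\gamma,\dot\gamma) - \lambda \;=\; \tfrac{1}{2}\,\mcl{L}_{X_0} g_0(\dot\gamma,\dot\gamma) \;=\; \frac{d}{ds}\,g_0(X_0,\dot\gamma).
\]
The left side is constant along $\gamma$ by left-invariance, so $g_0(X_0,\dot\gamma)$ is an affine function of arclength $s$. Since the metric is not Einstein, there is at least one direction in which this constant is nonzero, and hence $|X_0|$ grows (at least) linearly in $d(\cdot,e)$. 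No coordinates, no automorphism flow, no comparison of norms is needed. Your plan to realize $X_0$ explicitly as the generator of $\exp(tD)$ and write it as $d_i x^i \partial_i$ is exactly what the paper does in the \emph{next} lemma (Lemma~\ref{X_0}); you have effectively merged the two statements.

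The gap is in your step (iv). Bounded Christoffel symbols in a chart do \emph{not} force $|\partial_i|_{g_0}$, or the metric coefficients $(g_0)_{ij}$, to be uniformly bounded; a conformally Euclidean metric $e^{2f}\delta_{ij}$ with $|\nabla f|$ bounded but $f$ unbounded is a counterexample. So even after you know the coordinate components of $X_0$ are linear in $x$, the quantity $|X_0(x)|_{g_0}$ need not be comparable to $|x|$, and $|x|$ need not be comparable to $d(x,O)$. On solvable Lie groups (already for real hyperbolic space in half-space coordinates) the coordinate radius and geodesic radius can differ by an exponential factor. Your appeal to ``bounded geometry forces $|dx|$ and $|\cdot|_{g_0}$ to be uniformly comparable'' is therefore not justified by the hypotheses of class~$\msc A$. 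The paper sidesteps this entirely by working with geodesic arclength from the outset.
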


\begin{proof}
The soliton equation implies that for any vector field $Y \in C^\infty(TS)$, we have
\begin{align*}
 \rc(g_0)(Y,Y) - \lambda g_0 (Y,Y) = \half \mcl{L}_{X_0} g_0 (Y,Y).
\end{align*}
Since the left hand side of the above equation is left-invariant, it suffices to show that the vector field $X_0$ grows linearly at the identity element $e\in S$. Consider now a geodesic
\[ \gamma:=\gamma(s):[0,\epsilon) \longrightarrow S, \quad
\gamma(0)=e, \quad
g_0(\dot\gamma,\dot\gamma) = 1. \]
Then
\begin{align*}
\rc(g_0)(\dot\gamma, \dot\gamma) - \lambda 
& = \half \mcl{L}_{X_0}(\dot\gamma,\dot\gamma)\\
& = \half g_0(\nabla_{X_0} \dot\gamma,\dot\gamma) \\
& = \nabla_{\dot\gamma} \left(g_0(X, \dot\gamma)\right) - g_0\left(X, \nabla_{\dot\gamma}\dot\gamma\right)\\
& = \frac{d}{ds}\left( g_0(X,\dot\gamma)\right).
\end{align*}
From this, there must exist at least one direction $\dot{\tilde\gamma}\in T_e S$ such that $\frac{d}{ds}\left( g_0(X,\dot{\tilde\gamma})\right)\neq 0$ for otherwise the metric $g_0$ would be Einstein.  Hence, $X_0$ grows linearly.
\end{proof}

Moreover, we have the following characterization of the vector field $X_0$.
\begin{lemma}\label{X_0}
There exists a coordinate chart on the solvmanifold $S$ such that $X_0 = d_i x^i \p_i$, where $d_1,\dots,d_n$ are the eigenvalues of the soliton derivation $D$.  
\end{lemma}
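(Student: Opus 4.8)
The strategy is to identify $X_0$ with the infinitesimal generator of the canonical one-parameter group of automorphisms of $S$ determined by $D$, and then to produce a global chart in which this flow is linear and diagonal.

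First I would recall (see Section~2 of \cite{Lauret2011-sol}) how $X_0$ arises: since $D\in\mrm{Der}(\mfr s)$, the matrices $\varphi_t:=\exp(tD)$ form a one-parameter subgroup of $\mrm{Aut}(\mfr s)$, and because $S$ is simply connected each $\varphi_t$ lifts uniquely to $\Phi_t\in\mrm{Aut}(S)$ with $(d\Phi_t)_e=\varphi_t$; then $\{\Phi_t\}_{t\in\RR}$ is a one-parameter group of diffeomorphisms fixing $e$, and $X_0=\frac{d}{dt}\big|_{t=0}\Phi_t$ is precisely the field realizing $\rc(g_0)=\lambda g_0+\half\mcl L_{X_0}g_0$. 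Thus it suffices to linearize the flow $\Phi_t$.

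Second I would set up adapted coordinates. Let $\mfr n$ be the nilradical and $\mfr a:=\mfr n^\perp$ (orthogonal with respect to $g_0$). By the structure theory of solvsolitons \cite{Lauret2011-sol}, $\mfr s=\mfr n\rtimes\mfr a$ is standard, so $\mfr a$ is an abelian subalgebra and correspondingly $S=N\rtimes A$, with $N$ the simply connected nilpotent group with algebra $\mfr n$ and $A\cong\RR^k$ the simply connected abelian group with algebra $\mfr a$; in particular the product map $N\times A\to S$, $(n,a)\mapsto na$, is a diffeomorphism. The operator $D=\rc(g_0)-\lambda\,\id$ is $g_0$-self-adjoint (being a difference of $g_0$-self-adjoint operators), hence $\RR$-diagonalizable; since it preserves the characteristic ideal $\mfr n$, self-adjointness forces it to preserve $\mfr n^\perp=\mfr a$ as well. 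Choose $g_0$-orthonormal eigenbases $e_1,\dots,e_m$ of $\mfr n$ and $e_{m+1},\dots,e_n$ of $\mfr a$ with $De_i=d_i e_i$. Since $\exp_N:\mfr n\to N$ and $\exp_A:\mfr a\to A$ are diffeomorphisms ($N$ simply connected nilpotent, $A\cong\RR^k$), the map $\Psi:\RR^n\to S$ given by $\Psi(x)=\exp_N\!\big(\sum_{i\leq m}x^i e_i\big)\cdot\exp_A\!\big(\sum_{j>m}x^j e_j\big)$ is a global coordinate chart.

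Finally I would compute $\Phi_t$ in this chart. Each $\Phi_t$ preserves $N$ (a characteristic subgroup) and preserves $A$ (because $d\Phi_t=\exp(tD)$ preserves $\mfr a$ and $\Phi_t\circ\exp_S=\exp_S\circ\,d\Phi_t$), so being an automorphism it respects the decomposition $S=N\rtimes A$. Applying the naturality identity $\phi\circ\exp=\exp\circ\,d\phi$ to the restrictions of $\Phi_t$ to $N$ and to $A$, together with $De_i=d_i e_i$, gives $\Phi_t(\Psi(x))=\Psi\big(e^{d_1 t}x^1,\dots,e^{d_n t}x^n\big)$, i.e.\ in the chart $\Psi$ the flow of $X_0$ is $x^i\mapsto e^{d_i t}x^i$. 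Differentiating at $t=0$ yields $X_0=d_i x^i\p_i$, as claimed. The only step requiring genuine care is the invocation of the solvsoliton structure theory (standardness, with $\mfr a=\mfr n^\perp$ abelian); everything else is the naturality of $\exp$ and the semidirect-product manifold decomposition. Without the structure theory one would only obtain linearizability of $\Phi_t$ near its fixed point, not globally.
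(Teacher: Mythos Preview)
Your argument is correct, and it takes a genuinely different route from the paper's. The paper identifies the soliton diffeomorphisms via the explicit Ricci flow evolution formula of Lauret--Will \cite{LauretWill2013}: it writes the algebra-level flow as $\langle P(t)\cdot,\cdot\rangle_0$, peels off the scaling factor to isolate a reparametrized one-parameter family $\varphi(t)=\exp\bigl[(-2\lambda)^{-1}\log(-2\lambda t+1)\,D\bigr]$, lifts this to $\Phi(t)\in\mrm{Aut}(S)$, and then asserts (rather briefly, invoking a diffeomorphism $F$ aligning $\{\partial_i\}$ with a diagonalizing frame $\{E_i\}$) that in suitable coordinates $\Phi(t)^i=(-2\lambda t+1)^{-d_i/2\lambda}x^i$, whence differentiation at $t=0$ gives $X_0=d_i x^i\partial_i$.

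Your approach bypasses the Ricci flow formula entirely: you work directly with $\varphi_t=\exp(tD)$ and its lift $\Phi_t$, then invoke Lauret's structure theorem for solvsolitons to obtain the standard decomposition $S=N\rtimes A$, and build the global chart by patching exponential coordinates on the nilpotent and abelian factors. The naturality identity $\Phi_t\circ\exp=\exp\circ\,d\Phi_t$ on each factor then makes the linear diagonal action of $\Phi_t$ completely transparent. This is more self-contained (no appeal to \cite{LauretWill2013}) and, crucially, it makes explicit \emph{why} a global linearizing chart exists---a point the paper's proof leaves implicit. The paper's approach, on the other hand, has the virtue of connecting $X_0$ directly to the time-dependent diffeomorphisms appearing in the curvature-normalized flow, which is consonant with how the lemma is used downstream. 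Your observation that $D$ preserves both $\mfr n$ (characteristic) and $\mfr a=\mfr n^\perp$ (by $g_0$-self-adjointness of $D=\rc-\lambda\,\id$) is exactly what is needed to split the chart and is a nice structural point not made explicit in the paper.
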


\begin{proof}
As $S$ is diffeomorphic to $\mbb{R}^n$, we think of $S$ as $(\mbb{R}^n,\cdot_S)$, i.e., $\mbb{R}^n$ with a different group structure.  Let $\mbb{R}^n$ have coordinates $(x^i)$, and let $\mfr{s}$ have an orthonormal basis $\{E_i\}$ in which $\rc$ and $D$ are diagonal (these are simultaneously diagonalizable by \eqref{eq:alg-soliton}).  We have two bases of $\mfr{s} = T_e \mbb{R}^n$, $\{\partial_i\}$ and $\{E_i\}$, that extend to global frame fields, but only the latter is left-invariant.  We wish to express the left-invariant frame field $\{E_i\}$ in terms of the coordinate vector fields, and this can be done with any diffeomorphism $F : \mbb{R}^n \rightarrow \mbb{R}^n$ such that $dF|_p : \partial_i|_p \mapsto E_i|_p$.  

As described in Section 5 of \cite{LauretWill2013}, the evolution of the algebraic soliton inner product $\langle \cdot,\cdot \rangle_0$ under Ricci flow (on the Lie algebra level) is $\langle \cdot,\cdot \rangle_t = \langle P(t) \cdot,\cdot \rangle_0$, where $P(t) : \mfr{s} \rightarrow \mfr{s}$ is a positive-definite automorphism.  Explicitly,
\[ P(t) = (-2\lambda t + 1) \exp[ \lambda^{-1} \log(-2\lambda t+1) D ]. \]
We want to describe the evolution of the Ricci soliton on the Lie group level, so we isolate the non-scaling part of $P(t)$ and take the square root of its inverse:
\[ \varphi(t) := \exp[ (-2\lambda)^{-1} \log(-2\lambda t+1) D ] = \mrm{diag}(\dots,(-2\lambda +1)^{-d_i/2\lambda},\dots). \]
Now, as $\varphi(t) : \mfr{s} \rightarrow \mfr{s}$ is an automorphism and $S$ is simply connected, we can integrate it to get a unique automorphism $\Phi(t) : S \rightarrow S$ that generates the soliton vector field.  This satisfies $d \Phi(t)|_e = \varphi(t)$, and the Ricci flow for the left-invariant metric is now
\[ g(t) = (-2\lambda t+1) \Phi(t)^* g_0. \]
With the map $F$ and the expression for $\varphi$, it is not hard to see that 
\[ \Phi(t)^i = (-2\lambda t+1)^{-d_i/2\lambda} x^i, \]
and differentiating in $t$ gives the soliton vector field:
\[ X_0 = \frac{d}{dt}\Big|_{t=0} \Phi(t) = \sum_{i} d_i x^i \partial_i. \qedhere \]
\end{proof}

We still refer to the right hand side of the curvature-normalized Ricci-DeTurck flow \eqref{eq:cnrdf} as $Q_{g_0}(g)g$.

\begin{lemma}\label{lem:coef-soliton}
If we express $Q_{g_0}(g)g$ in terms of the first and second derivatives of $g$ in local coordinates, then
\begin{align}\label{eq:coef-soliton}
(Q_{g_0}(g)g)_{ij}& =a\left(g_0(x),g_0^{-1}(x),g(x),g^{-1}(x)\right)_{ij}^{kl pq}\p_p\p_q g_{kl}\\
&\quad + b\left(g_0(x),g_0^{-1}(x),\p g_0(x),g(x),g^{-1}(x),\p g(x), X_0\right)_{ij}^{kl p}\p_p g_{kl} \notag \\
&\quad\quad +c\left(g_0^{-1}(x),\p g_0(x),\p^2 g_0(x),g(x)\right)_{ij}^{kl}g_{kl}, \notag
\end{align}
where $X_0$ is the vector field in the Ricci soliton equation. The coefficients $a, b, c$ are analytic functions of their arguments and depend smoothly on $x\in M$.
\end{lemma}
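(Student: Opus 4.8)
The plan is to mimic the derivation of Lemma \ref{coef} but to keep careful track of where the soliton vector field enters. Recall that for the Einstein case, the curvature-normalized Ricci-DeTurck operator $Q_{g_0}(g)g = -2\rc(g) + 2\lambda g + \mcl{L}_X g - P_{g_0}(g)$ decomposes as in \eqref{eq:coef}: the Ricci tensor contributes the principal part $a \, \p^2 g$ together with quadratic-in-$\p g$ terms that go into $b$, the DeTurck term $P_{g_0}(g)$ contributes (after expanding $\delta^\ast$, $\delta$, $G$) further $\p^2 g$ terms into $a$ and $\p g$, $g$ terms into $b$, $c$, and the scaling term $2\lambda g$ is an undifferentiated term going into $c$. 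The only new ingredient here is the Lie derivative term $\mcl{L}_{X_0} g$.

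First I would expand $\mcl{L}_{X_0} g$ in local coordinates. For a vector field $X_0 = X_0^p \p_p$ one has
\begin{align*}
(\mcl{L}_{X_0} g)_{ij} = X_0^p \, \p_p g_{ij} + g_{pj} \, \p_i X_0^p + g_{ip} \, \p_j X_0^p.
\end{align*}
By Lemma \ref{X_0}, in the preferred coordinate chart $X_0 = d_i x^i \p_i$ (no sum in the coefficient), so $X_0^p = d_p x^p$ and $\p_i X_0^p = d_p \delta_i^p$, which are \emph{smooth bounded-on-compact-sets functions of $x$} — indeed the derivatives $\p X_0$ are literally constants. Thus the last two terms of $(\mcl{L}_{X_0}g)_{ij}$ are of the form (smooth function of $x$)$\,\cdot\, g_{kl}$, hence get absorbed into the coefficient $c$, and the first term $X_0^p \p_p g_{ij}$ is exactly a first-order term whose coefficient is a smooth function of $x$ through $X_0$. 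This is precisely why $X_0$ appears as an extra argument of $b$ in \eqref{eq:coef-soliton} and nowhere in $a$: the Lie derivative term is first-order, so it cannot affect the principal symbol, and its coefficient functions are linear (hence smooth, hence analytic) in the components of $X_0$.

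Concretely, the argument is: start from \eqref{eq:coef} applied verbatim to $-2\rc(g) + 2\lambda g - P_{g_0}(g)$ (this is literally Lemma \ref{coef}'s computation, which does not involve $X$ at all), then add $\mcl{L}_{X_0}g$ expanded as above. The $a$-coefficient is unchanged; the $c$-coefficient picks up the smooth-in-$x$ contributions $g_{pj}\p_i X_0^p + g_{ip}\p_j X_0^p$, which are still analytic functions of their listed arguments $(g_0^{-1}, \p g_0, \p^2 g_0, g)$ since they depend only on $g$ and on $x$; and the $b$-coefficient picks up the term $X_0^p \p_p g_{ij}$, so its coefficient now depends in addition on $X_0$, analytically (in fact linearly). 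The claim that $a, b, c$ are analytic in their arguments and smooth in $x$ then follows since analyticity and smoothness are preserved under sums and products, and since $X_0$ and $\p X_0$ are smooth (polynomial) in $x$ by Lemma \ref{X_0}.

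The only subtle point — and the one I would flag as the main obstacle — is that $X_0$ (unlike in the Einstein case, where $X$ is Killing) grows linearly, so the coefficient of the first-order term $\p_p g$ is \emph{unbounded} on $M$. This is harmless for the \emph{statement} of this lemma, which only asserts analyticity/smoothness of the structural coefficients; the unboundedness is deferred to the later analysis (it is exactly the Ornstein–Uhlenbeck phenomenon mentioned in the introduction), where one must work on the bounded ball $B_R$ of Theorem \ref{thm:stab-soliton} so that $X_0$ is bounded on the relevant region. So for the present proof I would simply remark that the computation is identical to that of Lemma \ref{coef} except for the addition of $\mcl{L}_{X_0}g$, whose three terms distribute among $b$ (the $X_0^p\p_p g_{ij}$ term) and $c$ (the $\p X_0$-terms) as above, with $X_0$ recorded as an extra argument of $b$; all analyticity and smoothness assertions carry over verbatim.
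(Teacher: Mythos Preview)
Your proof is correct and follows essentially the same approach as the paper: reduce to the Einstein computation (Lemma \ref{coef}) and then analyze only the new term $\mcl{L}_{X_0}g$, using Lemma \ref{X_0} to write $X_0=d_i x^i\p_i$ and sort the resulting pieces into $b$ and $c$. The paper expands the Lie derivative via $\nabla_i X_j+\nabla_j X_i$ and simplifies to $(d_i+d_j)g_{ij}+d_k x^k\p_k g_{ij}$, which is exactly your decomposition obtained directly from the partial-derivative formula for $\mcl{L}_{X_0}g$.
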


\begin{proof}
Given the computations of Lemma 5.1 in \cite{Wu2013}, we only need to determine the contributions to $a$, $b$, and $c$ from the term $\mcl{L}_{X_0} g$.  Dropping the 0 subscript on $X_0$, it is easy to see that 
\begin{align*} 
(\mcl{L}_X g)_{ij} 
&= \nabla_i X_j + \nabla_j X_i \\
&= g_{jk} (\partial_i X^k + X^l \Gamma_{il}^k) 
 + g_{ik} (\partial_j X^k + X^l \Gamma_{jl}^k) \\
&= g_{jk} \partial_i (d_k x^k) + g_{jk} (d_l x^l) \half g^{km} (\partial_i g_{l m} + \partial_l g_{im} - \partial_m g_{il}) \\
&\quad + g_{ik} \partial_j (d_k x^k) + g_{ik} (d_l x^l) \half g^{km} (\partial_j g_{l m} + \partial_l g_{jm} - \partial_m g_{jl}) \\ 
&= (d_i + d_j) g_{ij} + d_k x^k \partial_k g_{ij},
\end{align*}
using Lemma \ref{X_0}.  This means that the only new contributions to $c$ are contants terms involving the $d_i$, and the only new contributions to $b$ are the terms $d_i x^i$, which are linear in $x$.
\end{proof}

This shows that in the soliton case, $Q_{g_0}(g)g$ is a second-order elliptic operator with unbounded first-order coefficient. We will consider perturbations supported on $B_R$ and vanishing on $\p B_R$ for some $R>0$ so that the operator has bounded coefficients, and we can apply the maximal regularity theory to deduce dynamical stability from strict linear stability.

Fix $0<\sigma<\rho<1$ and $R>0$, we now consider the following sequence of densely embedded spaces:
\begin{align*}
\begin{array}{ccccccc}
\mathbb X_1 & \overset{d}\hooklongrightarrow & \mathbb E_1 & \overset{d}\hooklongrightarrow &\mathbb X_0 & \overset{d}\hooklongrightarrow & \mathbb 
E_0\\
\|	& & \| & & \| & & \| \\
\mathfrak h^{2+\rho}(B_R) & & \mathfrak h^{2+\sigma}(B_R) & &\mathfrak h^{0+\rho}(B_R)& & \mathfrak h^{0+\sigma}(B_R)\\
\end{array}
\end{align*}
For fixed $\frac{1}{2}\leq\beta<\alpha<1-\frac{\rho}{2}$, we still set
\begin{align*}
\mathbb X_\alpha:=(\mathbb X_0,\mathbb X_1)_\alpha
\quad \text{and} \quad
\mathbb X_\beta:=(\mathbb X_0,\mathbb X_1)_\beta.
\end{align*}
For $0<\epsilon\ll 1$ to be chosen in Lemma \ref{analytic2}, we will let
\begin{align*}
 \mathbb{G}_\beta: = \{ g \in \mathbb{X}_\beta : g > \epsilon g_0 \}
\quad \text{and} \quad
 \mathbb{G}_\alpha : = \mathbb{G}_\beta \cap \mathbb{G}_\alpha.
\end{align*}
The linear operators $L_{g_0}$ and $\hat L_{g_0}$ are defined analogously as in Subsection \ref{subsec:dynamical-soliton}.

We note that Lemma \ref{analytic} holds true when $g_0$ is a Ricci soliton metric. Indeed, it is straightforward to verify that the Schauder estimates and hence the rest of the proof of Lemma \ref{analytic} go through in the Ricci soliton case. So we have the following lemma.

\begin{lemma}\label{analytic2}
 Let $g_0$ be an algebraic Ricci soliton. The operator $\hat L_{g_0}$ is sectorial, and there exists $\epsilon >0$ in the definition of $\GG_\beta$ such that for each $\hat g \in \GG_\alpha$, $\hat L_{\hat g}$ generates a strongly continuous analytic semigroup on $\mathcal L(\EE_0,\EE_0)$.
\end{lemma}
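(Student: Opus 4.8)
The plan is to mirror the proof of Lemma \ref{analytic}, checking that each step survives the passage from the Einstein case (bounded coefficients on a complete manifold) to the soliton case (coefficients with linear growth, but restricted to the bounded domain $B_R$). First I would observe that, by Lemma \ref{lem:coef-soliton}, the operator $\hat L_{g_0} = Q_{g_0}(g_0)$ is a second-order elliptic operator whose zeroth- and second-order coefficients are bounded and whose only unbounded first-order contribution is the term $d_k x^k \p_k$ coming from $\mcl{L}_{X_0} g$. Restricting attention to tensors supported in $B_R$ and vanishing on $\p B_R$ — which is exactly how $\mfr{h}^{k+\alpha}(B_R)$ is defined (cf.\ Definition \ref{unweighted-space}) — the factor $x^k$ is bounded by $R$ in the single coordinate chart of Lemma \ref{X_0}, so $\hat L_{g_0}$ has bounded coefficients on $B_R$ and remains uniformly elliptic there. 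Thus the hypotheses needed to run the Schauder machinery are met.

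Next I would carry out the three substeps of the proof of Lemma \ref{analytic} verbatim. (i) The comparison $\mbf{L}(\tilde h) - C\tilde h \le \hat L_{g_0}(\tilde h) \le \mbf{L}(\tilde h) + C\tilde h$ follows exactly as before from linearizing $Q$ at the fixed point $g_0$ and freezing coefficients; combined with the strict linear stability hypothesis (b) of Theorem \ref{thm:stab-soliton}, which gives $(\mbf{L}h,h)\le -\epsilon\|h\|^2$ on $\mfr{h}^{k+\alpha}(B_R)$ since such $h$ vanish on $\p B_R$ and integration by parts is justified, this yields a spectral bound $\spec(\hat L_{g_0})\subset(-\infty,\eta_0)$ and hence that $\eta I - \hat L_{g_0}:\EE_1\to\EE_0$ is an isomorphism for $\Re\eta\ge\eta_0$. (ii) Because $\hat L_{g_0}$ is a linear second-order elliptic operator with bounded coefficients on $B_R$, the interior-plus-boundary Schauder estimates on $B_R$ give $\|h\|_{2+\sigma;B_R}\lesssim \|\hat L_{g_0}h\|_{0+\sigma;B_R} + \|h\|_{0;B_R}$, and applying this to $\eta I-\hat L_{g_0}$ together with the embedding $\mfr{h}^{0+\sigma}(B_R)\hookrightarrow\mfr{h}^0(B_R)$ gives $\|h\|_{2+\sigma;B_R}\lesssim\|(\eta I-\hat L_{g_0})h\|_{0+\sigma;B_R}$; by \cite[Theorem 1.2.2 and Remark 1.2.1(a)]{Amann1995} this establishes that $\hat L_{g_0}$ is sectorial, and since it is densely defined it generates a strongly continuous analytic semigroup by \cite[p.~34]{Lunardi1995}. (iii) Finally, using the analyticity statement in Lemma \ref{bddop1}(2) (valid here since the coefficient dependence on $\hat g$ is analytic, unchanged by the extra $X_0$-term), one chooses $\epsilon>0$ in the definition of $\GG_\beta$ so small that $\|\hat L_{\hat g}-\hat L_{g_0}\|_{\mcl{L}(\EE_1,\EE_0)}<\tfrac{1}{C+1}$ for $\hat g\in\GG_\alpha$, whence $\hat L_{\hat g}$ is sectorial by \cite[Proposition 2.4.2]{Lunardi1995} and generates a strongly continuous analytic semigroup on $\mcl{L}(\EE_0,\EE_0)$.

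The only genuinely new point — and the place I expect the most care is needed — is verifying that the classical Schauder estimate on the ball $B_R$ absorbs the first-order coefficient $d_k x^k$: since this coefficient is bounded (by $\max_k|d_k|\cdot R$) and Hölder continuous on $B_R$, it is covered by the standard theory in \cite{GilbargTrudinger2001}, so in fact no new analytic input is required and the restriction to $B_R$ is precisely what makes this work. Everything else is a line-by-line repetition of the Einstein argument, which is why the statement is presented with only the remark preceding it that ``the Schauder estimates and hence the rest of the proof of Lemma \ref{analytic} go through in the Ricci soliton case.'' I would therefore write the proof as: ``The coefficients of $\hat L_{g_0}$ are bounded and Hölder on $B_R$ by Lemma \ref{lem:coef-soliton} and Lemma \ref{X_0} (the term $d_kx^k$ is bounded by $R\max_k|d_k|$ there); hence the Schauder estimates of \cite{GilbargTrudinger2001} apply, and the proof of Lemma \ref{analytic} carries over verbatim, using the strict linear stability of $g_0$ on $\mfr{h}^{k+\alpha}(B_R)$ from hypothesis (b) of Theorem \ref{thm:stab-soliton}.''
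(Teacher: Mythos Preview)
Your proposal is correct and follows exactly the paper's approach: the paper does not give an independent proof of Lemma \ref{analytic2} but simply notes (in the sentence immediately preceding the lemma) that the Schauder estimates and hence the entire proof of Lemma \ref{analytic} go through in the soliton case. Your write-up is in fact a more detailed justification of precisely this claim, correctly isolating that the only new issue is the first-order coefficient $d_k x^k$ and that restriction to $B_R$ renders it bounded.
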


\begin{remark}
Lemma \ref{analytic2} is crucial for the application of Simonett's stability theorem (see condition (4) in Theorem \ref{stab}). When $g_0$ is a soliton, the operator $\mbf{L}$ (or $L_{g_0}$, or $\hat{L}_{g_0}$) is \emph{not} self-adjoint in any weighted $L^2$-space, and for this reason the authors in \cite[Section 5]{GuentherIsenbergKnopf2006} could prove that $\mbf{L}$ generates a $C_0$, but not analytic, semigroup, preventing them from establishing the dynamical stability of a soliton fixed point. However, the non-self-adjointness of $\mbf{L}$ is not an issue in this paper since we never use any weighted $L^2$-norm when proving Lemma \ref{analytic2}.
\end{remark}

\begin{proof}[Proof of Theorem \ref{thm:stab-soliton}]
Since the Ricci soliton $g_0$ is a strictly linearly stable fixed point, we apply Simonett's stability theorem (Theorem \ref{stab}) to conclude the dynamic stability. We omit the details since they resemble that for Einstein metrics on compact manifolds, see for example \cite{GuentherIsenbergKnopf2002}.
\end{proof}

\appendix

\section{Stability Theorem}\label{appxB}
We use the following version of Simonett's Stability Theorem. See \cite{GuentherIsenbergKnopf2002,Knopf2009} for a more general version of the theorem, and \cite{Simonett1995} for the most general statement.
\begin{thm}[Simonett]\label{stab}
Assume the following conditions hold:
\bn
\item[(1)] $\XX_1\overset{d}\hooklongrightarrow \XX_0$ and $\EE_1\overset{d}\hooklongrightarrow \EE_0$ are continuous and dense inclusions of Banach spaces. For fixed $0<\beta<\alpha<1$, $\XX_\alpha$ and $\XX_\beta$ are continuous interpolation spaces corresponding to the inclusion $\XX_1\overset{d}\hooklongrightarrow \XX_0$.
\item[(2)] Let
\begin{align}\label{eq:quasiparabeq}
\p_t g = Q(g)g
\end{align}
be an autonomous quasilinar parabolic equation for $t\geq 0$, with $Q(\cdot)\in C^k({\GG_\beta},\mathcal L(\XX_1,\XX_0))$ for some positive integer $k$ and some open set $\GG_\beta \subset \XX_\beta$.
\item[(3)] For each $\hat g\in\GG_\beta$, the domain $D(Q(\hat g))$ contains $\XX_1$, and there exists an extension $\hat Q(\hat g)$ of $Q(\hat g)$ to a domain $D(\hat Q(\hat g))$ containing $\EE_1$.
\item[(4)] For each $\hat g\in \GG_\alpha:=\GG_\beta\cap \XX_\alpha$, $\hat Q(\hat g)\in \mathcal L(\EE_1,\EE_0)$ generates a strongly continuous analytic semigroup on $\mathcal L(\EE_0,\EE_0)$.
\item[(5)] For each $\hat g\in \GG_\alpha$, $Q(\hat g)$ agrees with the restriction of $\hat Q(\hat g)$ to the dense subset $D(Q(\hat g))\subset\XX_0$.
\item[(6)] Let $(\EE_0,D(\hat Q(\cdot)))_\theta$ be the continuous interpolation space. Define the set 
\[ (\EE_0, D(\hat Q(\cdot)))_{1+\theta}:=\{x\in D(\hat Q(\cdot)):D(\hat Q(\cdot))(x)\in (\EE_0, D(\hat Q(\cdot))_\theta\}, \]
endowed with the graph norm of $\hat Q(\cdot)$ with respect to $(\EE_0,D(\hat Q(\cdot)))_\theta$. Then $\XX_0\cong (\EE_0,D(\hat Q(\cdot)))_\theta$ and $\XX_1\cong (\EE_0,D(\hat Q(\cdot)))_{1+\theta}$ for some $\theta\in(0,1)$.
\item[(7)] $\EE_1\overset{d}\hooklongrightarrow \XX_\beta \overset{d}\hooklongrightarrow \EE_0$ with the property that there are constants $C>0$ and $\theta\in(0,1)$ such that for all $x\in\EE_1$, one has
\begin{align*}
 \Vert x\Vert_{\XX_\beta} \leq C\Vert x \Vert_{\EE_0}^{1-\theta} \Vert x \Vert_{\EE_1}^\theta.
\end{align*}
\en

For each $\alpha\in(0,1)$, let $g_0\in\GG_\alpha$ be a fixed point of equation \eqref{eq:quasiparabeq}. Suppose that the spectrum of the linearized operator $DQ|_{g_0}$ is contained in the set $\{z\in\CC:\Re(z)\leq -\epsilon\}$ for some constant $\epsilon >0$. Then there exist constants $\omega\in(0,\epsilon)$ and $d_0, C_\alpha>0$, $C_\alpha$ independent of $g_0$, such that for each $d\in(0,d_0]$, one has
\begin{align*}
 \Vert \tilde g(t)-g_0 \Vert_{\XX_1} \leq \frac{C_\alpha}{t^{1-\alpha}} e^{-\omega t}\Vert \tilde g(0)-g_0\Vert_{\XX_\alpha}
\end{align*}
for all solutions $\tilde g(t)$ of equation \eqref{eq:quasiparabeq} with $\tilde g(0)\in B(\XX_\alpha,g_0,d)$, the open ball of radius $d$ centered at $g_0$ in the space $\XX_\alpha$, and for all $t\geq 0$.
\end{thm}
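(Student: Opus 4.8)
The plan is to follow the maximal-regularity argument of Da Prato--Grisvard and Simonett: linearize the quasilinear equation at $g_0$, pass to the variation-of-parameters (integral) equation, and solve it by a contraction in a space of functions carrying an exponential-decay weight, so that the asserted decay estimate is automatic from membership in that space.

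Concretely, write $g = g_0 + v$. Since $Q(g_0)g_0 = 0$ and $\Phi(g):=Q(g)g$ is $C^k$ (hypothesis (2)), Taylor expansion at $g_0$ gives $\p_t v = Av + N(v)$ with $A := DQ|_{g_0}$ and $N(v):=\Phi(g_0+v) - Av$. Decomposing $\Phi(g_0+v) = Q(g_0+v)g_0 + Q(g_0+v)v$ exhibits $N(v)$ as $\big(Q(g_0+v)-Q(g_0)-DQ|_{g_0}(v)\big)g_0 + \big(Q(g_0+v)-Q(g_0)\big)v$, and since $Q(\cdot)\in C^k(\GG_\beta,\mathcal L(\XX_1,\XX_0))$ this is \emph{superlinear}: for $v,w$ in a small $\XX_\beta$-ball,
\[ \|N(v)-N(w)\|_{\XX_0} \lesssim \big(\|v\|_{\XX_\beta}+\|w\|_{\XX_\beta}\big)\|v-w\|_{\XX_1} + \big(\|v\|_{\XX_1}+\|w\|_{\XX_1}\big)\|v-w\|_{\XX_\beta}, \]
every term carrying a factor in the weaker norm $\XX_\beta$. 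Moreover $A=DQ|_{g_0}$ differs from the frozen operator $Q(g_0)$ by a term bounded $\XX_\beta\to\XX_0$, so (extended to $\EE_0$ via (3)--(5)) it is still sectorial by (4); with $\spec(DQ|_{g_0})\subset\{z\in\CC:\Re z\le-\epsilon\}$, for each $\omega\in(0,\epsilon)$ the analytic semigroup satisfies $\|e^{tA}\|_{\mathcal L(\EE_0)}\lesssim e^{-\omega t}$, and — using analyticity and the identifications $\XX_0\cong(\EE_0,D(\hat Q(g_0)))_\theta$, $\XX_1\cong(\EE_0,D(\hat Q(g_0)))_{1+\theta}$ of (6) — the smoothing bounds $\|e^{tA}\|_{\mathcal L(\XX_0,\XX_\gamma)}\lesssim t^{-\gamma}e^{-\omega t}$ for $\gamma\in[0,1]$ together with the (weighted-in-time) maximal regularity of the convolution $f\mapsto\int_0^\cdot e^{(\cdot-s)A}f(s)\,ds$ in these little-H\"older interpolation classes.

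With these in hand, fix $\omega'\in(0,\omega/2]$ and set $\mathcal Z := \{\, v\in C((0,\infty),\XX_1)\cap C([0,\infty),\XX_\alpha) : \sup_{t>0}e^{\omega' t}\|v(t)\|_{\XX_\alpha} + \sup_{t>0}t^{1-\alpha}e^{\omega' t}\|v(t)\|_{\XX_1} < \infty \,\}$; for $\|v_0\|_{\XX_\alpha}\le d$ study $(\mathcal Tv)(t):=e^{tA}v_0+\int_0^t e^{(t-s)A}N(v(s))\,ds$. Feeding the superlinear bound with $\|v(s)\|_{\XX_\beta}\lesssim\|v(s)\|_{\XX_\alpha}$ (valid since $\alpha>\beta$), the $\XX_\alpha$-part of $\mathcal Tv$ is dominated by the $t$-independent Beta integral $\int_0^t(t-s)^{-\alpha}s^{-(1-\alpha)}\,ds$ and the $\XX_1$-part by the weighted maximal regularity of the convolution, the exponential factors being absorbed by the choice of $\omega'$; this yields $\|\mathcal Tv\|_{\mathcal Z}\le Md + C\|v\|_{\mathcal Z}^2$ and a matching Lipschitz estimate, so $\mathcal T$ is a contraction on a ball of radius $\sim d$ once $d\le d_0$ is small. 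Its fixed point is a global mild solution on $[0,\infty)$, promoted to a classical solution of \eqref{eq:quasiparabeq} by analyticity and interior parabolic regularity — the quasilinear coefficient being frozen at the solution itself and handled within the same iteration — with uniqueness in the natural parabolic class standard. The $\XX_1$-component of the $\mathcal Z$-norm is precisely $\|\tilde g(t)-g_0\|_{\XX_1}\le C_\alpha\, t^{-(1-\alpha)}e^{-\omega t}\|\tilde g(0)-g_0\|_{\XX_\alpha}$ after renaming $\omega'$ as $\omega$; since every constant depends only on the sectoriality bound of $\hat Q(g_0)$, the interpolation constant in (7), and the local Lipschitz constant of $Q$, the resulting $C_\alpha$ is uniform over any family of fixed points with uniform structure constants.

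I expect the main obstacle to be the $\XX_1$-estimate of the convolution term. Because $N(v(s))$ carries the initial-layer singularity $\|N(v(s))\|_{\XX_0}\sim\|v(s)\|_{\XX_\beta}\|v(s)\|_{\XX_1}\sim s^{-(1-\alpha)}$, the crude bound $\|e^{(t-s)A}\|_{\mathcal L(\XX_0,\XX_1)}\lesssim (t-s)^{-1}$ is non-integrable near $s=t$ and cannot be used; one must instead invoke the weighted-in-time maximal regularity of $A$ in the continuous (little-H\"older) interpolation spaces and verify that the exponents that appear are admissible — which is exactly where the precise relation among $\alpha$, $\beta$ and the interpolation exponent $\theta$ encoded in hypotheses (6)--(7) enters, with the inequality $\|x\|_{\XX_\beta}\lesssim\|x\|_{\EE_0}^{1-\theta}\|x\|_{\EE_1}^\theta$ of (7) serving the companion purpose of keeping the nonlinear feedback confined to $\XX_\beta$ throughout the iteration.
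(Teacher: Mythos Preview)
The paper does not prove this theorem: it is stated in the appendix as a known result due to Simonett, with pointers to \cite{Simonett1995} for the original and to \cite{GuentherIsenbergKnopf2002,Knopf2009} for the formulation used here. There is therefore no ``paper's own proof'' to compare your proposal against.

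That said, your outline is the standard route and is essentially the strategy behind Simonett's argument: linearize at the fixed point, use sectoriality plus the spectral gap to get exponentially decaying analytic-semigroup smoothing in the continuous interpolation scale, and close a weighted-in-time contraction for the variation-of-parameters formula using the superlinearity of the remainder. Your final paragraph correctly identifies the crux: the naive $\|e^{(t-s)A}\|_{\mathcal L(\XX_0,\XX_1)}\lesssim (t-s)^{-1}$ bound is non-integrable, and one must instead use genuine maximal regularity in the little-H\"older (continuous-interpolation) class---this is precisely the Da~Prato--Grisvard input that makes the whole scheme work and is why hypotheses (6) and (7) are there. One small point to tighten: the ``linearized operator $DQ|_{g_0}$'' in the statement means the full Fr\'echet derivative $v\mapsto (D_gQ|_{g_0}v)\,g_0 + Q(g_0)v$ of $g\mapsto Q(g)g$, not $Q(g_0)$ itself; your claim that the two differ by a term bounded $\XX_\beta\to\XX_0$ is right, but you should verify that this lower-order perturbation also transfers to the $\EE$-scale via (3)--(5) and (7), so that the extended linearization is sectorial on $\EE_0$ with the same spectral bound before you invoke the decay and maximal-regularity estimates.
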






\begin{bibdiv}
\begin{biblist}

\bib{Amann1995}{book}{
      author={Amann, Herbert},
       title={Linear and quasilinear parabolic problems. {V}ol. {I}},
      series={Monographs in Mathematics},
   publisher={Birkh\"auser Boston Inc.},
     address={Boston, MA},
        date={1995},
      volume={89},
        ISBN={3-7643-5114-4},
        note={Abstract linear theory},
}

\bib{Aubin1982}{book}{
      author={Aubin, Thierry},
       title={Nonlinear analysis on manifolds. {M}onge-{A}mp\`ere equations},
      series={Grundlehren der Mathematischen Wissenschaften [Fundamental
  Principles of Mathematical Sciences]},
   publisher={Springer-Verlag},
     address={New York},
        date={1982},
      volume={252},
        ISBN={0-387-90704-1},
}

\bib{Bamler2010-cusps}{article}{
      author={Bamler, Richard~H.},
       title={Stability of hyperbolic manifolds with cusps under {R}icci flow},
        date={2010},
     journal={Preprint},
      eprint={arXiv:1004.2058},
}

\bib{Bamler2010-sym}{article}{
      author={Bamler, Richard~H.},
       title={Stability of symmetric spaces of noncompact type under {R}icci
  flow},
        date={2010},
     journal={Preprint},
      eprint={arXiv:1011.4267v1},
}

\bib{DaPratoGrisvard1979}{article}{
      author={Da~Prato, Giuseppe},
      author={Grisvard, Pierre},
       title={Equations d'\'evolution abstraites non lin\'eaires de type
  parabolique},
        date={1979},
        ISSN={0003-4622},
     journal={Ann. Mat. Pura Appl. (4)},
      volume={120},
       pages={329\ndash 396},
         url={http://dx.doi.org/10.1007/BF02411952},
}

\bib{DaiWangWei2007}{article}{
      author={Dai, Xianzhe},
      author={Wang, Xiaodong},
      author={Wei, Guofang},
       title={On the variational stability of {K}\"ahler-{E}instein metrics},
        date={2007},
        ISSN={1019-8385},
     journal={Comm. Anal. Geom.},
      volume={15},
      number={4},
       pages={669\ndash 693},
         url={http://projecteuclid.org/getRecord?id=euclid.cag/1208527881},
}

\bib{DeTurck1983}{article}{
      author={DeTurck, Dennis~M.},
       title={Deforming metrics in the direction of their {R}icci tensors},
        date={1983},
        ISSN={0022-040X},
     journal={J. Differential Geom.},
      volume={18},
      number={1},
       pages={157\ndash 162},
  url={http://projecteuclid.org.ezproxy.lib.utexas.edu/getRecord?id=euclid.jdg/1214509286},
}

\bib{GilbargTrudinger2001}{book}{
      author={Gilbarg, David},
      author={Trudinger, Neil~S.},
       title={Elliptic partial differential equations of second order},
      series={Classics in Mathematics},
   publisher={Springer-Verlag},
     address={Berlin},
        date={2001},
        ISBN={3-540-41160-7},
        note={Reprint of the 1998 edition},
}

\bib{GlickPayne2010}{article}{
      author={Glickenstein, David and Payne, Tracy L.},
       title={Ricci flow on three-dimensional, unimodular metric {L}ie algebras},
        date={2010},
        ISSN={1019-8385},
     journal={Comm. Anal. Geom.},
      volume={18},
      number={5},
       pages={927--961},
}

\bib{Goldman1999}{book}{
      author={Goldman, William~M.},
       title={Complex hyperbolic geometry},
      series={Oxford Mathematical Monographs},
   publisher={The Clarendon Press Oxford University Press},
     address={New York},
        date={1999},
        ISBN={0-19-853793-X},
        note={Oxford Science Publications},
}

\bib{GuentherIsenbergKnopf2002}{article}{
      author={Guenther, Christine},
      author={Isenberg, James},
      author={Knopf, Dan},
       title={Stability of the {R}icci flow at {R}icci-flat metrics},
        date={2002},
        ISSN={1019-8385},
     journal={Comm. Anal. Geom.},
      volume={10},
      number={4},
       pages={741\ndash 777},
}

\bib{GuentherIsenbergKnopf2006}{article}{
      author={Guenther, Christine},
      author={Isenberg, James},
      author={Knopf, Dan},
       title={Linear stability of homogeneous {R}icci solitons},
        date={2006},
        ISSN={1073-7928},
     journal={Int. Math. Res. Not.},
       pages={Art. ID 96253, 30},
         url={http://dx.doi.org/10.1155/IMRN/2006/96253},
}

\bib{Hamilton1982}{article}{
      author={Hamilton, Richard~S.},
       title={Three-manifolds with positive {R}icci curvature},
        date={1982},
        ISSN={0022-040X},
     journal={J. Differential Geom.},
      volume={17},
      number={2},
       pages={255\ndash 306},
  url={http://projecteuclid.org.ezproxy.lib.utexas.edu/getRecord?id=euclid.jdg/1214436922},
}

\bib{Hebey1999}{book}{
      author={Hebey, Emmanuel},
       title={Nonlinear analysis on manifolds: {S}obolev spaces and
  inequalities},
      series={Courant Lecture Notes in Mathematics},
   publisher={New York University Courant Institute of Mathematical Sciences},
     address={New York},
        date={1999},
      volume={5},
        ISBN={0-9658703-4-0; 0-8218-2700-6},
}

\bib{Jablonski2011-hom}{article}{
      author={Jablonski, Michael},
       title={Homogeneous {R}icci solitons},
        date={2013},
     journal={to appear in Crelle},
      eprint={arXiv:1109.6556},
}

\bib{JablonskiEtAl2013-linear}{article}{
      author={Jablonski, Michael},
      author={Petersen, Peter},
      author={Williams, Michael~Bradford},
       title={Stability of algebraic {Ricci} solitons under
  curvature-normalized {R}icci flow},
        date={2013},
     journal={J. reine angew. Math.},
      note={To appear.},
}

\bib{Knopf2009}{article}{
      author={Knopf, Dan},
       title={Convergence and stability of locally {$\mathbb{R}^N$}-invariant
  solutions of {R}icci flow},
        date={2009},
        ISSN={1050-6926},
     journal={J. Geom. Anal.},
      volume={19},
      number={4},
       pages={817\ndash 846},
  url={http://dx.doi.org.ezproxy.lib.utexas.edu/10.1007/s12220-009-9091-x},
}

\bib{KnopfYoung2009}{article}{
      author={Knopf, Dan},
      author={Young, Andrea},
       title={Asymptotic stability of the cross curvature flow at a hyperbolic
  metric},
        date={2009},
        ISSN={0002-9939},
     journal={Proc. Amer. Math. Soc.},
      volume={137},
      number={2},
       pages={699\ndash 709},
         url={http://dx.doi.org/10.1090/S0002-9939-08-09534-8},
}

\bib{Lauret2011-sol}{article}{
      author={Lauret, Jorge},
       title={Ricci soliton solvmanifolds},
        date={2011},
        ISSN={0075-4102},
     journal={J. Reine Angew. Math.},
      volume={650},
       pages={1\ndash 21},
         url={http://dx.doi.org/10.1515/CRELLE.2011.001},
}

\bib{LauretWill2013}{article}{
      author={Lauret, Jorge},
      author={Will, Cynthia},
       title={On the diagonalization of the {R}icci flow on lie groups},
        date={2013},
     journal={Proc. Amer. Math. Soc.},
        note={In press.},
}

\bib{LiYin2010}{article}{
      author={Li, Haozhao},
      author={Yin, Hao},
       title={On stability of the hyperbolic space form under the normalized
  {R}icci flow},
        date={2010},
        ISSN={1073-7928},
     journal={Int. Math. Res. Not. IMRN},
      number={15},
       pages={2903\ndash 2924},
}

\bib{Lott2010}{article}{
      author={Lott, John},
       title={Dimensional reduction and the long-time behavior of {R}icci flow},
        date={2010},
        ISSN={0010-2571},
     journal={Comment. Math. Helv.},
      volume={85},
      number={3},
       pages={485\ndash 534},
         url={http://dx.doi.org.ezproxy.lib.utexas.edu/10.4171/CMH/203},
}

\bib{Lunardi1995}{book}{
      author={Lunardi, Alessandra},
       title={Analytic semigroups and optimal regularity in parabolic
  problems},
      series={Progress in Nonlinear Differential Equations and their
  Applications, 16},
   publisher={Birkh\"auser Verlag},
     address={Basel},
        date={1995},
        ISBN={3-7643-5172-1},
}

\bib{Lunardi2009}{book}{
      author={Lunardi, Alessandra},
       title={Interpolation theory},
     edition={Second},
      series={Appunti. Scuola Normale Superiore di Pisa (Nuova Serie). [Lecture
  Notes. Scuola Normale Superiore di Pisa (New Series)]},
   publisher={Edizioni della Normale, Pisa},
        date={2009},
        ISBN={978-88-7642-342-0; 88-7642-342-0},
}

\bib{SchnurerEtAl2008}{article}{
      author={Schn{\"u}rer, Oliver~C.},
      author={Schulze, Felix},
      author={Simon, Miles},
       title={Stability of {E}uclidean space under {R}icci flow},
        date={2008},
        ISSN={1019-8385},
     journal={Comm. Anal. Geom.},
      volume={16},
      number={1},
       pages={127\ndash 158},
         url={http://projecteuclid.org/getRecord?id=euclid.cag/1213020540},
}

\bib{SchnurerEtAl2011}{article}{
      author={Schn{\"u}rer, Oliver~C.},
      author={Schulze, Felix},
      author={Simon, Miles},
       title={Stability of hyperbolic space under {R}icci flow},
        date={2011},
        ISSN={1019-8385},
     journal={Comm. Anal. Geom.},
      volume={19},
      number={5},
       pages={1023\ndash 1047},
  url={http://intlpress.com/CAG/2011/19-5/CAG-19-5-A8-1387-SCHNUERER.pdf},
}

\bib{Sesum2006}{article}{
      author={Sesum, Natasa},
       title={Linear and dynamical stability of {R}icci-flat metrics},
        date={2006},
        ISSN={0012-7094},
     journal={Duke Math. J.},
      volume={133},
      number={1},
       pages={1\ndash 26},
         url={http://dx.doi.org/10.1215/S0012-7094-06-13311-2},
}

\bib{Simonett1995}{article}{
      author={Simonett, Gieri},
       title={Center manifolds for quasilinear reaction-diffusion systems},
        date={1995},
        ISSN={0893-4983},
     journal={Differential Integral Equations},
      volume={8},
      number={4},
       pages={753\ndash 796},
}

\bib{Williams2013-systems}{article}{
      author={Williams, Michael~Bradford},
       title={Stability of solutions of certain extended {R}icci flow systems},
        date={2013},
     journal={Preprint},
      eprint={arXiv:1301.3945},
}

\bib{Wu2013}{article}{
      author={Wu, Haotian},
       title={Stability of complex hyperbolic space under curvature-normalized
  ricci flow},
        date={2013},
        ISSN={0046-5755},
     journal={Geometriae Dedicata},
      volume={164},
      number={1},
       pages={231\ndash 258},
         url={http://dx.doi.org/10.1007/s10711-012-9770-9},
}

\bib{Ye1993}{article}{
      author={Ye, Rugang},
       title={Ricci flow, {E}instein metrics and space forms},
        date={1993},
        ISSN={0002-9947},
     journal={Trans. Amer. Math. Soc.},
      volume={338},
      number={2},
       pages={871\ndash 896},
         url={http://dx.doi.org/10.2307/2154433},
}

\bib{Young2010}{article}{
      author={Young, Andrea},
       title={Stability of {R}icci {Y}ang-{M}ills flow at {E}instein
  {Y}ang-{M}ills metrics},
        date={2010},
        ISSN={1019-8385},
     journal={Comm. Anal. Geom.},
      volume={18},
      number={1},
       pages={77\ndash 100},
}

\end{biblist}
\end{bibdiv}

\end{document}